\newtheorem{theorem}{Theorem}[section]
\newtheorem{corollary}[theorem]{Corollary}
\newtheorem{lemma}[theorem]{Lemma}
\newtheorem{proposition}[theorem]{Proposition}
\theoremstyle{definition}
\newtheorem{definition}[theorem]{Definition}
\theoremstyle{remark}
\newtheorem{example}[theorem]{Example}
\newtheorem{remark}[theorem]{Remark}
\newcommand\cA{\mathcal{A}}
\newcommand\cB{\mathcal{B}}
\newcommand\cD{\mathcal{D}}
\newcommand\cF{\mathcal{F}}
\newcommand\cG{\mathcal{G}}
\newcommand\cH{\mathcal{H}}
\newcommand\cK{\mathcal{K}}
\newcommand\cM{\mathcal{M}}
\newcommand\cP{\mathcal{P}}
\newcommand\cS{\mathcal{S}}
\newcommand\bC{\mathbb{C}}
\newcommand\bG{\mathbb{G}}
\newcommand\bN{\mathbb{N}}
\newcommand\bP{\mathbb{P}}
\newcommand\bQ{\mathbb{Q}}
\newcommand\bR{\mathbb{R}}
\newcommand\bS{\mathbb{S}}
\newcommand{\ind}{\mathbbm{1}}
\newcommand{\ud}{\mathrm{d}}
\newcommand{\llb}{\llbracket}
\newcommand{\rrb}{\rrbracket}
\newcommand\fps{(\Omega, \cF, (\cF_t)_t,\bP)}
\newcommand{\mylabel}[2]{#2\def\@currentlabel{#2}\label{#1}}
\title{An abstract decomposition of measures \\
	and its many applications}
\author[1]{Alessandro Milazzo \thanks{alessandro.milazzo@unito.it}}
\author[2]{Pietro Siorpaes \thanks{p.siorpaes@imperial.ac.uk}}
\affil[1]{School of Management and Economics, Dept.\ ESOMAS, University of Turin, Corso Unione Sovietica, 218 Bis, 10134, Turin, Italy.}
\affil[2]{ Dept.\ of Mathematics, Imperial College London, 16-18 Princess Gardens, SW71NE London, UK.}
\date{\today}
\numberwithin{equation}{section}
\begin{document}

	\maketitle
	
	\begin{abstract}
		We consider a little-known abstract decomposition result for positive  measures due to Dellacherie, and show that it yields many  decompositions of measures, several of which are new. We then extend Dellacherie's result to (controlled) vector measures, and apply it to obtain a decomposition of semimartingales due to Bichteler, on which we improve.
		Then, we investigate how the outputs of the decomposition  depend on its inputs, in particular characterising the two elements of the decomposition as projections in the sense of Riesz spaces and of metric spaces. Finally, we prove a decomposition theorem for  strictly positive operators on Riesz spaces which generalises Dellacherie's Theorem.
		
		\vspace{3pt}
		
		\noindent\emph{Keywords:} measure, decomposition, stopping time, semimartingale, spectral measure, order projection, Riesz space.\\
		\emph{Mathematics Subject Classification (2020):} 28A33, 28B05, 46G10, 47B15.
		
	\end{abstract}

	\section{Introduction}
	In his book \cite[Th.\ T14, page 30]{Del72}, Dellacherie introduced the following theorem.
	\begin{theorem}\label{DellacherieThm}
		Let $\cF$ be a $\sigma$-algebra on $\Omega$, $\mu$ be a (positive and $\sigma$-additive)  $\sigma$-finite measure on $(\Omega,\cF)$, and $\cG\subseteq\cF$ be non-empty and closed under countable unions. Then there exist a set $\bar{G}\in\cG$ and two measures $\mu_\cG$ and $\mu_\cG^\perp$ such that
		\begin{equation}\label{DellacherieDecomp}
			\mu = \mu_\cG+\mu_\cG^\perp,
		\end{equation}
		where $\mu_\cG(\bar{G}^c)=0$, and $\mu_\cG^\perp(G)=0$ for every $G\in\cG$. Moreover, the decomposition \eqref{DellacherieDecomp} is unique.
	\end{theorem}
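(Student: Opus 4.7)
The plan is to build $\bar G$ as a ``largest'' member of $\cG$ in the sense of $\mu$-measure, and then split $\mu$ by restricting to $\bar G$ and $\bar G^c$. Since $\mu(G)$ may be infinite for $G\in\cG$, I would first replace $\mu$ by an \emph{equivalent} finite measure $\nu$ (which exists by $\sigma$-finiteness, e.g.\ $\nu=\sum_n c_n\,\mu(\,\cdot\cap \Omega_n)$ for a $\sigma$-finite exhaustion $(\Omega_n)$ and small enough constants $c_n>0$). Because $\nu$ and $\mu$ share null sets, any set chosen to be maximal for $\nu$ will serve the purpose for $\mu$ as well.

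In this reduced setting, let $s:=\sup\{\nu(G):G\in\cG\}<\infty$ and pick $G_n\in\cG$ with $\nu(G_n)\to s$; after replacing $G_n$ by $\bigcup_{k\le n}G_k$ we may assume the sequence is increasing, so $\bar G:=\bigcup_n G_n$ lies in $\cG$ (by closure under countable unions) and satisfies $\nu(\bar G)=s$ by monotone convergence. I would then define
\[
\mu_\cG(A):=\mu(A\cap\bar G),\qquad \mu_\cG^\perp(A):=\mu(A\cap\bar G^c),
\]
so that $\mu=\mu_\cG+\mu_\cG^\perp$ and $\mu_\cG(\bar G^c)=0$ are automatic. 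The nontrivial property $\mu_\cG^\perp(G)=0$ for every $G\in\cG$ follows from the maximality of $\bar G$: if $\nu(G\cap\bar G^c)>0$ then $G\cup\bar G\in\cG$ would yield $\nu(G\cup\bar G)=s+\nu(G\setminus\bar G)>s$, a contradiction; since $\nu$ and $\mu$ have the same null sets, this forces $\mu(G\cap\bar G^c)=0$.

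For uniqueness, suppose $\mu=\mu_1+\mu_2=\mu_1'+\mu_2'$ with $\mu_1,\mu_1'$ concentrated on some $H,H'\in\cG$, respectively, and $\mu_2,\mu_2'$ vanishing on every element of $\cG$. Setting $K:=H\cup H'\in\cG$ gives $\mu_2(K)=\mu_2'(K)=0$, and hence for any $A\in\cF$,
\[
\mu_1(A)=\mu_1(A\cap K)=\mu(A\cap K)=\mu_1'(A\cap K)=\mu_1'(A),
\]
so $\mu_1=\mu_1'$ and therefore $\mu_2=\mu_2'$. The subtlest step I anticipate is the initial reduction to a finite measure: without it, $s$ can be infinite and ``approaching the sup'' does not pin down a genuine maximiser. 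Once $\nu$ is finite, the rest is a standard exhaustion argument in the spirit of the classical Lebesgue decomposition.
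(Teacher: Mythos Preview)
Your proof is correct and follows essentially the same route as the paper: reduce to an equivalent finite measure, produce a $\leq_\mu$-maximal element $\bar G$ of $\cG$, and set $\mu_\cG=\mu(\cdot\cap\bar G)$, $\mu_\cG^\perp=\mu(\cdot\cap\bar G^c)$; the uniqueness argument via $K=H\cup H'$ is also the same in spirit. The only cosmetic difference is that the paper packages the construction of $\bar G$ as ``take the $\mu$-essential supremum of $\cG$, which lies in $\cG$ since $\cG$ is closed under countable unions'', whereas you unpack that existence proof explicitly as the exhaustion $\nu(G_n)\uparrow s$ --- these are the same argument at different levels of abstraction.
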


	We find this  decomposition result  to be very neat, and we agree with Dellacherie that\footnote{This being our translation of the original French text `qui m\'eriterait d'\^{e}tre classique'.} `it deserves to be classical'. 
	It is however a vast understatement to say that it has \emph{not} become classical. Indeed, we were not able to find any mention of it anywhere in the literature; even more curiously, Dellacherie himself, after introducing Theorem \ref{DellacherieThm},  never uses it once in his whole book, even if his \cite[Th.\ T41, page 58]{Del72} can easily be proved using Theorem \ref{DellacherieThm}, as we will show in our Corollary \ref{cor: stop time decomp}.
	
	The main contributions of this paper are as follows. We first show how Theorem \ref{DellacherieThm}, though easy to prove, can very profitably be applied to obtain many decompositions of measures, stopping times and processes, some well-known and some new; to this purpose, we also provide an extension of Theorem \ref{DellacherieThm} to vector measures and we show that it can be applied to the stochastic integral and to spectral measures. 
	
	We then  study of the dependence of $\bar{G}$ and $\mu_\cG$ on $\cG$ and $\mu$. In particular, we characterise the two elements of the decompositions, $\mu_\cG$ and $\mu_\cG^\perp$, as projections in the sense of Riesz spaces and of metric spaces. 
	Finally, we obtain a decomposition theorem for  strictly positive operators on Riesz spaces which generalises Theorem \ref{DellacherieThm}.
	
	We hope our effort may help pluck Dellacherie's nice theorem from obscurity. \\
	
	Here an outline of the paper.
	In Section \ref{se: proof of Dellacherie's theorem}, we feature Theorem \ref{thm: Dellacherie Thm Positive Meas} (a slight refinement of  Theorem \ref{DellacherieThm}). 
	In Section \ref{se: examples of decompositions},  we give many examples of decompositions of positive measures which can be obtained using Theorem \ref{thm: Dellacherie Thm Positive Meas}. 
	In Section \ref{se: decomposition vector measures},  we vastly extend the range of applicability of Theorem \ref{thm: Dellacherie Thm Positive Meas} by easily establishing a variant that holds for controlled vector measures, i.e., essentially all vector measures (including all Banach-valued and all $L^0$-valued ones), which we use to derive the Hahn-Jordan decomposition of a real measure. In Section \ref{se: stochastic integral}, we apply such extension to the stochastic integral with respect to a semimartingale, and obtain a decomposition of semimartingales due to Bichteler, but with a much more elementary proof and a sharper statement. 
	In Section \ref{se: the spectral measure}, we prove that any  spectral measure is controlled, so our decomposition theorem  can be applied to it. 
	In Section \ref{se: dependence of decomposition}, we investigate the monotonicity and order continuity of  $\bar{G}$ and $\mu_\cG$  as functions of $\cG$; and we show that the maps $\mu \mapsto \mu_\cG$, $\mu \mapsto \mu_\cG^\perp$, defined on the Banach lattice of real-measures,  are the projections in the sense of Riesz spaces (i.e., they are band projections) and of metric spaces (i.e., they are unique nearest point maps).
	Finally, in Section \ref{sec: dec in riesz spaces}, we state and prove a decomposition theorem for  strictly positive operators on Riesz spaces, which generalises Theorem \ref{DellacherieThm}.
	
	\section{Dellacherie's theorem for positive measures}
	\label{se: proof of Dellacherie's theorem}
	In this section we consider a theorem which slightly generalises Theorem \ref{DellacherieThm}.  This variant brings several advantages: (i) it provides a simple characterisation of $\bar{G}\in\cG$ which will prove useful in suggesting a proof and, more importantly, in enabling us to extend the decomposition theorem to vector measures in Section \ref{se: decomposition vector measures}; (ii) it allows us to consider finitely-additive (and not only countably-additive) measures; (iii) it outlines the case when there is a uniquely distinguished set $\bar{G}$ (the largest one) in its equivalence class\footnote{Notice that $\bar{G}$ in Theorem \ref{DellacherieThm} is always unique up to $\mu$-null sets, as it trivially follows from the uniqueness of  the decomposition \eqref{DellacherieDecomp}.}, which is important as this sometimes occurs in practice.
	Moreover, we point out that an alternative and essentially trivial proof of such decomposition result can be obtained by applying the concept of essential supremum. This has the considerable advantage that,  while Dellacherie's proof  relies on several non-trivial properties\footnote{These properties: (i) any family $M$ of measures bounded from above by $\mu$ admits a supremum  $\nu:=\sup M=\sup M_c$ for some countable  $\{\nu_k\}_{k\in \bN}=M_c \subseteq M$, where $\{\nu_k\}_{k\in \bN}$ can w.l.o.g.~be chosen to be increasing;  (ii) if $\{\nu_k\}_{k\in \bN}$ is  a sequence of measures then $(\sup_k \nu_k)(F)=(\sup_k \nu_k(F))$ holds \emph{if $\{\nu_k\}_{k\in \bN}$ is increasing}  (the first supremum being on the space of measures, the second on the set of real numbers).}
	of the (pointwise) order\footnote{I.e., $\nu \leq \mu$ if $\nu(F)\leq \mu(F)$ for all $F\in \cF$.} between measures (with no explicit mention or reference), the concept of $\mu$-essential supremum (i.e., of supremum on the space $L^0(\mu)$) is well known and its properties are easily proved.

	\medskip
	
	We now introduce some standard notations and definitions; in particular we recall the definition of $\mu$-essential supremum,  and an important property thereof.

	Given a  measurable space  $(\Omega,\cF)$,  $A^c$  denotes the complement of $A\subseteq \Omega$, i.e., $A^c:=\Omega\setminus A$. If $\lambda$ is a positive or a vector\footnote{I.e., a measure with values in a vector space, as defined in Section \ref{se: decomposition vector measures}.} measure (finitely or countably additive) on $(\Omega,\cF)$,  we will say that $F\in \cF$ is a \emph{$\lambda$-null set} (or a null set for $\lambda$) if the restriction $\lambda_{|F}:=\lambda(F \cap \cdot )$ of $\lambda$ to $F\in \cF$ is the zero measure, i.e., if $\lambda(G)=0$ for all $G \subseteq F, G\in \cF$; in this case we will say that $\lambda$ is \emph{concentrated} on $F^c$. Notice that if $\lambda$ is a  positive measure then $F\in \cF$ is $\lambda$-null if and only if $\lambda(F)=0$, so this  definition of null set extends the usual definition given for positive measures. 
	If a family of measures $(\lambda_i)_{i\in I}$ are concentrated on pairwise disjoint sets $(A_i)_{i\in I}$, we will say that they are (mutually) \emph{singular}. Given two positive measures $\mu$ and $\lambda$, we will say that $\mu$ is \textit{absolutely continuous} with respect to $\lambda$ if $\lambda(A)=0$ implies $\mu(A)=0$ for $A\in\cF$; in this case we write $\mu \ll \lambda$. If $\mu \ll \lambda$ and $\lambda \ll \mu$, we will say that $\mu$ and $\lambda$ are \textit{equivalent}, and write $\lambda\sim \mu$. From now on, when we say simply `measure', without further qualifications, we will mean a countably-additive positive measure, i.e., any countably-additive map $\mu:\cF\to[0,\infty]$ such that $\mu(\emptyset)=0$.

	\begin{remark}
		\label{rem: ae order} Let $\mu$ be a finite measure. The \emph{$\mu$-essential supremum} of a family $\cH$ of random variables is  the supremum (i.e., the smallest majorant) of $\cH$ seen as a subset of the Riesz space $L^0(\mu)$ of (equivalence classes of) random variables,  endowed with the order $\leq_\mu$ defined by: $X\leq_{\mu} Y$ if $X\leq Y $ $\mu$-a.e., i.e., if $\mu(\{X>Y\})=0$. In other words,  we say that a random variable $X$ is the $\mu$-essential supremum of a family $(X_i)_{i\in I}$ of random variables if 
		\begin{enumerate}
			\item  $X_i \leq X$ $\mu$-a.e.~for every $i\in I$,
			
			\item $X_i \leq Y$ $\mu$-a.e.~for every $i\in I$ implies $X \leq Y$ $\mu$-a.e..
		\end{enumerate} 
		It is easy to prove (see, e.g., \cite[Prop.\ 4.1.1]{EdSu92}) the existence of the $\mu$-essential supremum $X$ of any family $\cH \subseteq L^0(\mu)$ which is bounded from above\footnote{I.e., There exists $Y \in L^0(\mu)$ such that $H\leq Y$ $\mu$-a.e. for all $H\in \cH$.}, and the fact that $X$ is (the equivalence class of) $\sup_{n\in \bN} X_n$ for some sequence of random variables $X_n \in \cH$. It easily follows that  the $\mu$-essential supremum of a family of indicators (i.e., a $\{0,1\}$-valued random variable) is an indicator (see, e.g., \cite[Prop.\ 4.1.2]{EdSu92}), and so one can talk of $\mu$-essential supremum of a family of (measurable) sets. In other words, if we endow\footnote{To be precise $\leq_\mu$  is an order not really on $\cF$, but rather on the family $\cF^\mu$ of equivalence classes of sets in $\cF$ which differ by a $\mu$-null set.} $\cF$ with the order $\leq_\mu$ of inclusion $\mu$-a.e.~($A\leq_\mu B$ means $A \subseteq B $ $\mu$-a.e., i.e., $\mu(A \setminus B)=0$), then any family of sets $\cG \subseteq \cF$ admits a $\leq_\mu$-supremum $G$, and there exists a sequence $ G_n\in \cG, n\in \bN$ such that $\cup_n G_n =G$ $\mu$-a.e.. 
	\end{remark}

	\begin{remark}
		\label{}
		The (pointwise) supremum of an uncountable family of random variables is in general not measurable. So, in measure theory the concept of supremum of families of random variables has no importance; instead one always considers the $\mu$-essential supremum of families of \emph{equivalence classes} of random variables. The two concepts of course coincide when applied to countable families of random variables, since the countable union of sets of measure 0 has measure 0.
	\end{remark}

	\begin{theorem}
		\label{thm: Dellacherie Thm Positive Meas}
		Let $\mu$ be a finitely-additive positive measure on $(\Omega,\cF)$ and $\cG\subseteq\cF$
		. Then,
		\begin{enumerate}
			\item \label{Maximum set Dellacherie Decomp Equivalence} The following two statements are equivalent:
			
			\begin{enumerate}
				\item \label{Maximum set}  There exists a set $G_\mu\in\cG$ such that $G \setminus G_\mu$ is a $\mu$-null set for every $G\in \cG$.
				
				\item \label{DellacherieDecompVectMeas} There exist a set $\bar{G}\in\cG$ and two finitely-additive measures $\mu_\cG,\mu_\cG^\perp$ on $(\Omega,\cF)$ such that
				\begin{equation}
					\label{eq: decomp dell measures}
					\hspace{-1 cm} 	\mu = \mu_\cG+\mu_\cG^\perp, \text{ } \mu_\cG \text{ is concentrated on  } \bar{G}\in \cG,  \text{ every } G\in\cG \text{ is  $\mu_\cG^\perp $-null}.
				\end{equation}
			\end{enumerate} 
			Moreover, if the above statements hold then $G_\mu$ and $\bar{G}$ are unique up to a $\mu$-null set, $G_\mu=\bar{G}$ up to a $\mu$-null set, $\mu_\cG=\mu(G_\mu \cap \cdot)$ and $\mu_\cG^\perp=\mu(G_\mu^c \cap \cdot)$; in particular, such $\mu_\cG,\mu_\cG^\perp$ are unique.
			
			\item \label{exists max set} If $\mu$ is $\sigma$-finite and countably-additive, and $\cG$ is non-empty and closed under countable unions, then there exists a set $G_\mu\in\cG$ as in item \eqref{Maximum set}.
		\end{enumerate}
		
	\end{theorem}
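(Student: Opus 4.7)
\medskip

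\noindent\textbf{Plan for item \eqref{Maximum set Dellacherie Decomp Equivalence} (equivalence and uniqueness).}
My plan is to prove the equivalence by exhibiting explicit formulas going in each direction. For \eqref{Maximum set}$\Rightarrow$\eqref{DellacherieDecompVectMeas}, given $G_\mu\in\cG$, I would simply set $\bar G:=G_\mu$, $\mu_\cG:=\mu(G_\mu\cap\cdot)$ and $\mu_\cG^\perp:=\mu(G_\mu^c\cap\cdot)$; then $\mu_\cG+\mu_\cG^\perp=\mu$ and $\mu_\cG$ is concentrated on $\bar G$ by construction, while for any $G\in\cG$ and any $\cF$-measurable $G'\subseteq G$ one has $\mu_\cG^\perp(G')=\mu(G'\cap G_\mu^c)\le\mu(G\setminus G_\mu)=0$, so $G$ is $\mu_\cG^\perp$-null. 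For \eqref{DellacherieDecompVectMeas}$\Rightarrow$\eqref{Maximum set}, take $G_\mu:=\bar G$; for any $G\in\cG$, $\mu(G\setminus\bar G)=\mu_\cG(G\cap\bar G^c)+\mu_\cG^\perp(G\cap\bar G^c)$, the first summand vanishing because $\mu_\cG$ lives on $\bar G$ and the second because $G\cap\bar G^c\subseteq G$ and $G$ is $\mu_\cG^\perp$-null.

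For uniqueness, if $G_\mu$ and $G_\mu'$ both satisfy \eqref{Maximum set}, applying each one's defining property to the other gives that $G_\mu\,\triangle\,G_\mu'$ is $\mu$-null. To read off the explicit formulas $\mu_\cG=\mu(\bar G\cap\cdot)$, $\mu_\cG^\perp=\mu(\bar G^c\cap\cdot)$ from any valid decomposition, I would simply split $\mu_\cG^\perp(F)$ along $\bar G$ and $\bar G^c$: the piece on $\bar G$ is zero because $\bar G\in\cG$ is $\mu_\cG^\perp$-null, while on $\bar G^c$ the measure $\mu_\cG$ vanishes, giving $\mu_\cG^\perp(F\cap\bar G^c)=\mu(F\cap\bar G^c)$; subtracting from $\mu$ yields $\mu_\cG=\mu(\bar G\cap\cdot)$.

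\medskip

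\noindent\textbf{Plan for item \eqref{exists max set} (existence of $G_\mu$).}
Here I would invoke the essential-supremum machinery recalled in Remark~\ref{rem: ae order}. First I reduce to the finite-measure case: since $\mu$ is $\sigma$-finite, fix a partition $(A_k)_{k\in\bN}\subseteq\cF$ of $\Omega$ with $\mu(A_k)<\infty$ and consider the equivalent finite measure $\mu':=\sum_k 2^{-k}(1+\mu(A_k))^{-1}\mu_{|A_k}$, which has the same null sets as $\mu$, hence the same $\leq_\mu$-order on $\cF^\mu=\cF^{\mu'}$. By Remark~\ref{rem: ae order} applied to $\mu'$, the family $\cG$ admits a $\leq_{\mu'}$-supremum realised as $\bigcup_n G_n$ $\mu'$-a.e.\ (equivalently $\mu$-a.e.) for some sequence $(G_n)_n\subseteq\cG$. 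Setting $G_\mu:=\bigcup_n G_n$, which lies in $\cG$ by closure under countable unions, the supremum property directly gives $G\setminus G_\mu$ $\mu$-null for every $G\in\cG$, as required.

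\medskip

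\noindent\textbf{Main obstacle.}
No single step looks genuinely hard: the equivalence in item~\eqref{Maximum set Dellacherie Decomp Equivalence} is bookkeeping, and item~\eqref{exists max set} is a one-line consequence of the essential-supremum construction once one has reduced to finite $\mu$. The only delicate point is the compatibility between the two flavours of ``null'': since in item~1 we work only with finite additivity and allow arbitrary $\cG$, I have to be careful to verify the $\mu_\cG^\perp$-null condition at the level of subsets $G'\subseteq G$ with $G'\in\cF$ (not merely $\mu_\cG^\perp(G)=0$, which would be automatic from positivity but insufficient for vector-measure extensions in later sections), and to keep the arguments purely finitely-additive in item~1 while isolating countable additivity and $\sigma$-finiteness as the ingredients needed in item~2.
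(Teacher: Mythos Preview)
Your proposal is correct and follows essentially the same approach as the paper: the equivalence in item~\eqref{Maximum set Dellacherie Decomp Equivalence} is handled via the explicit formulas $\mu_\cG=\mu(\bar G\cap\cdot)$, $\mu_\cG^\perp=\mu(\bar G^c\cap\cdot)$ in both directions, and item~\eqref{exists max set} is reduced to the finite-measure case (the paper uses an equivalent finite measure of the form $f\cdot\mu$ with $f>0$ integrable, while you use a partition-weighted sum, which is the same device) and then invokes the essential supremum from Remark~\ref{rem: ae order} together with closure of $\cG$ under countable unions.
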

	
	\begin{proof}
		We first prove item \ref{Maximum set Dellacherie Decomp Equivalence}. If \eqref{DellacherieDecompVectMeas} holds then $\bar{G}^c$ is $\mu_\cG$-null and $\bar{G}$ is $\mu^\perp_\cG$-null, and so $\mu_\cG=\mu(\bar{G} \cap \cdot)$ and $\mu_\cG^\perp=\mu(\bar{G}^c \cap \cdot)$, which also implies that $G \cap \bar{G}^c=G \setminus \bar{G}$ is $\mu$-null for every $G\in \cG$, i.e., \eqref{Maximum set} holds with $G_\mu:=\bar{G}$. Conversely, if $G_\mu$ is as in \eqref{Maximum set}, then \eqref{DellacherieDecompVectMeas} trivially holds with $\bar{G}:= G_\mu$, $\mu_\cG=\mu(\bar{G} \cap \cdot),\mu_\cG^\perp=\mu(\bar{G}^c \cap \cdot)$.
		Such $G_\mu$ is unique up to $\mu$-null sets, since if $\bar{G}_\mu\in\cG$ is also such that $G \setminus \bar{G}_\mu$ is $\mu$-null for every $G\in \cG$, then both $\bar{G}_\mu \setminus G_\mu$ and $ G_\mu \setminus \bar{G}_\mu $ are $\mu$-null.
		
		We now prove item \ref{exists max set}, using Remark \ref{rem: ae order}. If $\mu$ is a \emph{finite} measure then the $\mu$-essential supremum $G$ of $\cG$ exists; moreover, $G$  is the \emph{maximum} of $\cG$ under $\leq_\mu$ (indeed $G\in \cG$ because $\cG$ is closed under countable unions and there exists a sequence $ G_n\in \cG, n\in \bN$ such that $\cup_n G_n =G$ $\mu$-a.e.); in other words,  $G$ satisfies  item \eqref{Maximum set}.
		
		If $\mu$ is $\sigma$-finite but not finite then there exists $f>0$ such that $f\in L^1(\mu)$ (see, e.g., \cite[Lem.\ 6.9]{Ru06}). Therefore, we can apply the same argument as above to the finite positive measure $\nu=f \cdot \mu$, which is equivalent to $\mu$, and obtain the desired result.
	\end{proof}

	We want to point out that, though hardly apparent at first sight, behind the curtains our proof and Dellacherie's proof (\cite[Th.\ T14, page 30]{Del72}) are actually closely related, as follows. Consider w.l.o.g.~the case of finite $\mu$. While Dellacherie considers the order on the space $\cM$ of real measures, one can rely on the (easier to deal with) order on the space $L^1(\mu)$ of (equivalence classes of) $\mu$-integrable functions, because all measures considered in Dellacherie's proof are absolutely continuous with respect to $\mu$ (i.e., of the form\footnote{We use the notation $f\cdot\mu$ to denote the measure  $(f\cdot\mu)(F):=\int_F f d\mu$ for all $F \in \cF$.} $f \cdot \mu$), and so one can identify the measure $f \cdot \mu$ with the function $f$; moreover, the map $f \mapsto f \cdot \mu$, from $L^1(\mu)$ to $\{\nu \in \cM: \nu \ll \mu\}$ is an isomorphism of Banach lattices (see \cite[Th.\ 13.19]{AlBo99}).

	If $\mu, \cG,G_\mu$ are as in Theorem \ref{thm: Dellacherie Thm Positive Meas}, in analogy with the case discussed later in Corollary \ref{co: atomic decomp}, we will  use the following terminology:  we will  call $\mu_\cG$, $\mu_\cG^\perp$, and the equivalence class (up to equality $\mu$-a.e.) $\cG_\mu$ of $G_\mu$, respectively, the \emph{$\cG$-atomic part of $\mu$, the $\cG$-diffuse  part of $\mu$},  and the \emph{$\cG$-atomic support of $\mu$}. Notice that, as $G_\mu$ is unique up to $\mu$-null sets, $\cG_\mu$ is unique; as usual, we will often conveniently (though slightly inappropriately) disregard the distinction between $G_\mu$  and $\cG_\mu$, for example calling $G_\mu$ the $\cG$-atomic support of $\mu$, or writing $\cG_\mu$ to mean one of its representatives.

	\begin{remark}
		\label{rem: maximum set}
		As we will see, it sometimes happens that one is interested in a family of sets $\cG$ which  admits a maximum $G^*$ with respect to the \emph{pointwise} order $\leq $, i.e., the order of inclusion of sets (for which $A\leq B$ means $A \subseteq B$, i.e., the indicator functions satisfy $\ind_A \leq \ind_B$ at every point); equivalently, $G^*\in \cG$ satisfies $G \setminus G^*=\emptyset$ for all $G\in \cG$.
		In this case, though any set $\bar{G}$ which differs from $G^*$ by a $\mu$-null set will still satisfy \eqref{eq: decomp dell measures}, $G^*$ has the distinguished property of being the largest of the sets $\bar{G}$ such that \eqref{eq: decomp dell measures} holds (since the maximum is always unique). 
		
		Moreover, in this case we can apply item 1 of Theorem \ref{thm: Dellacherie Thm Positive Meas} to  conclude that $\mu_\cG,\mu_\cG^\perp$ as in \eqref{eq: decomp dell measures} exist even if $\mu$ is only finitely-additive. 
	\end{remark}

	\section{Examples of decompositions}
	\label{se: examples of decompositions}
	In this section we apply  Theorem \ref{thm: Dellacherie Thm Positive Meas} to obtain many decompositions of measures. We first highlight three well known decompositions, and a recently-discovered closely related one. Then we briefly list many other ones, several of which are new. The reader will notice how many choices of $\cG$ are of the form `all the subsets of $\Omega$ of size at most x', where the `size' could be meant in different ways: as measure, as dimension, as Baire category.
	
	\begin{corollary}[Lebesgue decomposition]\label{cor:LebDecomp}
		Let $\mu$ and $\nu$ be two $\sigma$-finite measures on $(\Omega,\cF)$. There exist two $\sigma$-finite measures $\mu^{ac}$, $\mu^s$ such that
		\begin{equation}\label{LebesgueDecomp}
			\mu=\mu^{ac}+\mu^s,
		\end{equation}
		where $\mu^{ac}$ is absolutely continuous with respect to $\nu$, and $\mu^s$ and $\nu$ are singular. Moreover, the decomposition \eqref{LebesgueDecomp} is unique. 
	\end{corollary}
	\begin{proof}
		Let $\cG:=\{A\in\cF: \nu(A)=0\}$ be the family of $\nu$-null sets, which contains $\emptyset$ and is closed under countable unions. Thus, Theorem \ref{thm: Dellacherie Thm Positive Meas} applied to $\mu$ and $\cG$ provides the unique decomposition $\mu=\mu_\cG+\mu_\cG^\perp$ where $\mu_\cG$ is concentrated on a $\nu$-null set (i.e., $\mu_\cG$ and $\nu$ are singular) and every $\nu$-null set is a $\mu_\cG^\perp$-null set (i.e., $\mu_\cG^\perp \ll \nu$). Hence, $\mu^{ac}:=\mu_\cG^\perp$ and $\mu^s:=\mu_\cG$ satisfy the required decomposition \eqref{LebesgueDecomp}. Finally, if \eqref{LebesgueDecomp} holds (i.e., $\mu=\mu^{ac}+\mu^s$ with $\mu^{ac}$ absolutely continuous with respect to $\nu$, and $\mu^s$ and $\nu$ singular), then $\mu^{ac}$ and $\mu^s$ satisfy, respectively, the properties that define $\mu_\cG^\perp$ and $\mu_\cG$, which are unique by Theorem \ref{thm: Dellacherie Thm Positive Meas}, showing the uniqueness of the decomposition \eqref{LebesgueDecomp}.
	\end{proof}

	Recall that a set $A\in\cF$ is an \emph{atom} of the measure $\mu$ if $\mu(A)>0$ and for every $B\in\cF$, $B\subseteq A$ either $\mu(B)=0$ or $\mu(B)=\mu(A)$; restated in the language of Remark \ref{rem: ae order}, the atoms of $\mu$ are the (equivalence classes of) minimal sets of $(\cF^\mu,\leq_\mu)$ which are not the minimum (i.e., which are not $\mu$-a.e.~$=\emptyset$).
	Note that if $A,B$ are  atoms of $\mu$, then either they coincide up to $\mu$-null sets, or they are disjoint up to $\mu$-null sets.
	We say that a measure $\mu$ is (\textit{purely}) \textit{atomic} if every measurable set of strictly positive measure contains an atom of $\mu$, and \emph{diffuse} (or \emph{non-atomic}) if it gives measure zero to every atom of $\mu$.
		 We refer to  \cite{jo70at} for further information on atomic and non-atomic measures.

\begin{corollary}[Decomposition into atomic and diffuse components]
		\label{co: atomic decomp}
		Let $\mu$ be a $\sigma$-finite measure on $(\Omega,\cF)$. 
	Then 
	\begin{enumerate}
		\item \label{it: atomic iff conc on countable}  $\mu$ is atomic if and only if it is concentrated on a countable union of atoms.
		\item  \label{it: atomic decomp}
		There exist two $\sigma$-finite measures $\mu^{a}$, $\mu^{d}$ such that
			\begin{equation}\label{AtomicDecomp}
			\mu=\mu^{a}+\mu^{d},
			\end{equation}
			where $\mu^{a}$ is atomic and $\mu^{d}$ is diffuse. Moreover, the decomposition \eqref{AtomicDecomp} is unique.
	\end{enumerate}	
\end{corollary}
\begin{proof}
	We now prove the `if' implication of item \ref{it: atomic iff conc on countable}. Note that if $(G_n)_{n\in \bN}\subseteq \cF$ are  atoms of $\mu$, disjoint up to $\mu$-null sets, then $ \mu(\cup_n (E\cap G_n))=\sum_n \mu(E\cap G_n)$ for every $E\in\cF$. Let $E\in \cF$, and assume that $\mu$ is concentrated on the countable union of atoms $\cup_n G_n$; then, 
	$$\mu(E)=\mu(E\cap (\cup_n G_n))=\mu(\cup_n (E\cap G_n)),$$ 
	so if $\mu(E)>0$ then $\mu(E\cap G_n)>0$ for some $n$, showing that $E$ contains the atom $E \cap G_n$, and thus $\mu$ is atomic.

		Let $\cG$ be the family of all the countable unions of atoms of $\mu$, which is trivially closed under countable unions. If  $\cG \neq \emptyset$, by applying Theorem \ref{thm: Dellacherie Thm Positive Meas}  to $\mu$ and $\cG$, we obtain the unique decomposition $\mu=\mu_\cG+\mu_\cG^\perp$ where $\mu_\cG$ is concentrated on a countable union of atoms  $\bar{G}\in\cG$; thus, $\mu_\cG$ is atomic (as proved above) and $\mu_\cG^\perp$ gives null measure to every atom (i.e., $\mu_\cG^\perp$ is diffuse), proving the existence 		 in item \ref{it: atomic decomp} with $\mu^{a}:=\mu_\cG$ and $\mu^{d}:=\mu_\cG^\perp$. If instead  $\cG = \emptyset$, taking $\mu^a:=0,\mu^{d}:=\mu$ proves their existence.

	Moreover, if   $\mu$ is  atomic then $\cG \neq \emptyset$ and $\mu_\cG^\perp=0$. Otherwise, by Theorem \ref{thm: Dellacherie Thm Positive Meas}, there exists $\bar G\in\cG$ such that $\mu(\bar{G}^c)\geq \mu_\cG^\perp(\bar{G}^c)=\mu_\cG^\perp(\Omega)>0$ and so $\bar{G}^c$ would contain a $\mu$-atom $G$; yet $\mu_\cG(G)=0$ (because $\mu_\cG$ is concentrated on  $\bar{G}$ and $\bar{G}\cap G=\emptyset$) and 
	$\mu_\cG^\perp(G)=0$ (because $\mu$ is diffuse), and so $\mu(G)=0$, contradicting the definition of atom. This shows that if  $\mu$ is  atomic then  $\mu=\mu_\cG$, which is thus concentrated on a countable union of atoms, proving the `only if' implication of item \ref{it: atomic iff conc on countable}.	
 
	Finally, assume that \eqref{AtomicDecomp} holds, i.e., $\mu=\mu^{a}+\mu^{d}$ with $\mu^a$ atomic and $\mu^d$ diffuse. By item \ref{it: atomic iff conc on countable}, $\mu^{a}$ is concentrated on a $\bar{G}\in \cG$. If  $\mu$ has no atoms (i.e., $\cG = \emptyset$) then $\mu^a$ is concentrated on $\emptyset$ and so $\mu^a=0$ and $\mu^d=\mu$; if instead $\cG \neq \emptyset$, then $\mu^{a}$ and $\mu^{d}$ satisfy, respectively, the properties that define $\mu_\cG$ and $\mu_\cG^\perp$, which are unique by Theorem \ref{thm: Dellacherie Thm Positive Meas}, showing the  uniqueness of the decomposition in \eqref{AtomicDecomp}.
\end{proof}
	
	\begin{remark}
		\label{re: smallest set of atoms}
		While Corollary \ref{co: atomic decomp} applies in any measurable space, a more precise statement can be given when each atom of $\mu$ is (the equivalence class of) a singleton or, more generally,  an atom of $\cF$. An atom of $\cF$ is defined as a set $A\in \cF$ such that $B\in\cF$, $B\subseteq A$ imply either $B=\emptyset$ or $B=A$; restated in the language of Remark \ref{rem: maximum set}, an atom of $\cF$ is a minimal set of $(\cF,\leq)$ which is not a minimum (i.e., which is non-empty).

		Indeed, in this case the set  of  atoms of $\cF$ with strictly positive measure $\mu$ is at most countable (because $\mu$ is $\sigma$-finite), and thus the unions of \emph{all} such sets is the pointwise maximum $G^*$ of the family $\tilde{\cG}$ of countable unions  of (some) such sets. This gives an example of a set $G^*$ which admits a maximum for the pointwise order, see Remark \ref{rem: maximum set}.
	\end{remark}

	We now show two applications of Theorem \ref{thm: Dellacherie Thm Positive Meas} to stopping and random times: the first one leads to the decomposition $\tau=\tau_A\wedge \tau_{A^c}$ of a stopping time $\tau$ into its accessible part $\tau_A$ and totally inaccessible part $\tau_{A^c}$; the second one leads to the decomposition $\tau=\tau_B\wedge\tau_{B^c}$ of a random time $\tau$ into its thin part $\tau_B$ and thick part $\tau_{B^c}$. For the former decomposition, we refer to \cite[Ch.~III, Sec.~2]{Pro04} where the definitions of accessible and totally inaccessible stopping times are given and the result is presented as Theorem 3. For the latter decomposition, we refer to \cite[Th.\ 5.5]{aksamit2021thin} and we will recall later  the non-standard definitions of thin and thick random times. Here, we fix an underlying filtered probability space $\fps$ and, for a set $A\in\cF$ and a stopping time $\tau$, we use the standard notations
	\begin{equation}
		\tau_A(\omega):=\begin{cases}
			\tau(\omega) & \text{if } \omega\in A\\
			\infty & \text{if } \omega\notin A.
		\end{cases}
	\end{equation}
	$$\llb \tau \rrb:=\{(\omega,t)\in\Omega\times[0,\infty): \tau(\omega)=t \}.$$ 
	and point out  that the identity
	\begin{align}
		\label{eq: graph tau_A}
		\llb \tau_A \rrb= 
		(A \times [0,\infty)) \cap \llb \tau \rrb 
	\end{align}
	trivially holds  for any stopping time $\tau$ and $A\in\cF$.
	\begin{corollary}
		\label{cor: stop time decomp}
		If $\tau$ is a stopping time, there exists  $A\in \cF$   such that $A\subseteq\{\tau<\infty \}$, $\tau_A$ is accessible, $\tau_{A^c}$ is totally inaccessible and $\tau=\tau_A\wedge\tau_{A^c}$ $\bP$-a.s.
	\end{corollary}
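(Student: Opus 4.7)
The plan is to apply Theorem \ref{thm: Dellacherie Thm Positive Meas} with $\mu = \bP$ and with $\cG$ chosen to be the family of measurable sets on which the restriction of $\tau$ is accessible. More precisely, I would set
\[
\cG := \{G \in \cF : G \subseteq \{\tau < \infty\} \text{ and } \tau_G \text{ is accessible}\},
\]
which contains $\emptyset$ (so $\cG$ is non-empty). The first step is to verify that $\cG$ is closed under countable unions: given $G_n \in \cG$, the identity \eqref{eq: graph tau_A} yields $\llb \tau_{\cup_n G_n}\rrb = \bigcup_n \llb \tau_{G_n}\rrb$, and a countable union of countable unions of graphs of predictable stopping times is again such a union, so $\tau_{\cup_n G_n}$ is accessible.

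Once $\cG$ is identified, Theorem \ref{thm: Dellacherie Thm Positive Meas}\eqref{exists max set} produces $A := G_\bP \in \cG$ such that $\bP(G \setminus A) = 0$ for every $G \in \cG$. By the very definition of $\cG$, we have $A \subseteq \{\tau<\infty\}$ and $\tau_A$ is accessible. The identity $\tau = \tau_A \wedge \tau_{A^c}$ is immediate (and actually holds pointwise, not just a.s.) since at every $\omega$ exactly one of the two stopping times equals $\tau(\omega)$ while the other equals $\infty$.

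The main point, and the only non-routine step, is to show that $\tau_{A^c}$ is totally inaccessible. I would proceed by contradiction: suppose there exists a predictable stopping time $\sigma$ with $\bP(\sigma = \tau_{A^c} < \infty) > 0$. Define
\[
B := \{\sigma = \tau < \infty\} \cap A^c \in \cF.
\]
On $B$ we have $\tau = \sigma$, so by \eqref{eq: graph tau_A} the graph $\llb \tau_B\rrb \subseteq \llb \sigma\rrb$; hence $\tau_B$ is accessible and $B \in \cG$. But $B \subseteq A^c$ and $\bP(B) = \bP(\sigma = \tau_{A^c} < \infty) > 0$, contradicting the maximality property $\bP(B \setminus A) = 0$ characterising $A$. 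Therefore $\tau_{A^c}$ is totally inaccessible, which completes the proof.

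The potential obstacle is making sure the family $\cG$ is set up so that (i) it is closed under countable unions, (ii) the maximal set $A$ it produces automatically sits inside $\{\tau<\infty\}$, and (iii) a hypothetical predictable approximation of $\tau_{A^c}$ can be converted into a set in $\cG$ that lies in $A^c$ with positive probability. The inclusion $A \subseteq \{\tau<\infty\}$ is enforced directly in the definition of $\cG$, and the graph identity \eqref{eq: graph tau_A} handles both (i) and (iii) uniformly.
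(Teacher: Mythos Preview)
Your proposal is correct and follows essentially the same route as the paper's proof: same choice of $\cG$, same verification that $\cG$ is closed under countable unions via \eqref{eq: graph tau_A}, and the same use of Theorem~\ref{thm: Dellacherie Thm Positive Meas} to produce the maximal set $A$. The only cosmetic difference is in the verification that $\tau_{A^c}$ is totally inaccessible: the paper argues directly by taking an arbitrary predictable $\sigma$, setting $B:=A\cup\{\tau=\sigma<\infty\}$, and deducing $\bP(\tau_{A^c}=\sigma<\infty)=0$ from $B\in\cG$, whereas you argue by contradiction with $B:=\{\sigma=\tau<\infty\}\cap A^c$; the two are equivalent.
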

	
	\begin{proof}
		Since the constant $\infty$ is a predictable stopping time, the family
		$$\cG:=\{A\in\cF: A\subseteq\{\tau<\infty \} \text{ and } \tau_A \text{ is accessible} \}$$
		contains $\emptyset$, and it is not empty.
		Moreover, $\cG$ in closed under countable unions, as we now show. If $(A_n)_{n\in\bN}\subseteq\cG$ and $A=\cup_{n} A_n$, then \eqref{eq: graph tau_A} shows that  $\llb \tau_A \rrb = \cup_n \llb \tau_{A_n} \rrb$ and $A\subseteq\{\tau<\infty \}$. Since $\tau_{A_n}$ is accessible, there exists   predictable stopping times $(\tau_{n,m})_m$ such that
		$$\llb \tau_A \rrb = \cup_n \llb \tau_{A_n} \rrb\subseteq \cup_{n,m}\llb \tau_{n,m}\rrb \quad \bP\text{-a.s.},$$
		and so $\tau_A$ is accessible, thus $A\in\cG$. Hence, we can apply Theorem \ref{thm: Dellacherie Thm Positive Meas} to $\bP$ and $\cG$ and obtain $A:=G_\bP\in\cG$ such that $\bP(A^c\cap B)=0$ for every $B\in\cG$. Since  $A\in \cG$, $\tau_A$ is  accessible; let us prove that $\tau_{A^c}$ is totally inaccessible. Indeed, if $\sigma$ be a predictable stopping time and
		$$B:=A\cup\{\tau=\sigma<\infty \},$$
		then $\llb \tau_B \rrb \subseteq \llb \tau_A\rrb \cup \llb \sigma \rrb$ $\bP$-a.s., and so $B\in \cG$ and thus $\bP(A^c\cap B)=0$. Since
		$$B\cap A^c= \{\tau=\sigma<\infty \}\cap A^c= \{\tau_{A^c}=\sigma<\infty \},$$
		we get that
		$$0=\bP(A^c\cap B)=\bP(\{\tau_{A^c}=\sigma<\infty \}),$$
		and so $\tau_{A^c}$ is totally inaccessible. 
	\end{proof}

	\begin{remark}
		\label{rem: other proofs stop time decomp}
		As  suggested by the proofs of Corollary \ref{cor: stop time decomp}  given in \cite[Ch.\ 4, Th.\ 80,]{DeMeA} and \cite[Th.\ T41, page 58]{Del72}, there are
		other possible choices for the family $\cG$ to be chosen in the proof of Corollary \ref{cor: stop time decomp}. For example, if $\cS(\tau)$ denotes the set of increasing sequences $\left(\sigma_{n}\right)_n$ of stopping times bounded from above by $\tau$, and for  $\left(\sigma_{n}\right)_n\in \cS(\tau)$ we define
		$$ A\left[\left(\sigma_n\right)_n\right]:=\left\{\lim_{n} \sigma_{n}=\tau, \sigma_{n}<\tau \text { for all } n \right\} \cup\{\tau=0\}, $$ 
		then one can choose $\cG$ to be $\{A\left[\left(\sigma_n\right)_n\right]: \left(\sigma_{n}\right)_n\in \cS(\tau)\}$. Alternatively, one could take for $\cG$ the set of countable unions of sets of the form $\{\sigma=\tau<\infty\}$, where $\sigma$ ranges across all \emph{predictable} stopping times.
	\end{remark}

	To obtain an analogous  decomposition for random times, we recall the definitions of \emph{thin random time} and \emph{thick random  time}. A random time $\tau$ (i.e., a $[0,\infty]$-valued, $\cF$-measurable random variable) is thin if its graph is a thin set, i.e., if there exists a sequence of stopping times $(\tau_n)_{n=1}^\infty$ such that $\llb \tau \rrb\subseteq \cup_{n=1}^\infty \llb \tau_n\rrb$. A random time is thick if it avoids any stopping time $\sigma$, i.e., if $\bP(\tau=\sigma<\infty)=0$ for every stopping time $\sigma$.
	
	\begin{corollary}\label{cor:thin-thick}
		If $\tau$ is a random time, there exists $B\in\cF$ such that $B\subseteq\{\tau<\infty \}$, $\tau_B$ is thin, $\tau_{B^c}$ is thick and $\tau=\tau_B\wedge\tau_{B^c}$ $\bP$-a.s.
	\end{corollary}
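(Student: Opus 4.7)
The plan is to mimic the proof of Corollary \ref{cor: stop time decomp} almost verbatim, replacing `accessible' by `thin' and using arbitrary stopping times in place of predictable stopping times. Concretely, I would define
$$\cG:=\{B\in\cF: B\subseteq\{\tau<\infty\} \text{ and } \tau_B \text{ is thin}\},$$
which contains $\emptyset$ (the empty time $\tau_\emptyset \equiv \infty$ being trivially thin), and apply Theorem \ref{thm: Dellacherie Thm Positive Meas} to the probability measure $\bP$ and the family $\cG$.

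The first thing to check is that $\cG$ is closed under countable unions, so that item \ref{exists max set} of Theorem \ref{thm: Dellacherie Thm Positive Meas} applies. Given $(B_n)_n\subseteq \cG$ and $B:=\cup_n B_n$, identity \eqref{eq: graph tau_A} gives $\llb \tau_B \rrb = \cup_n \llb \tau_{B_n}\rrb$, and concatenating the countably many sequences of stopping times that cover each $\llb \tau_{B_n}\rrb$ produces a single sequence covering $\llb \tau_B\rrb$; clearly $B\subseteq\{\tau<\infty\}$. Thus $B\in\cG$, and Theorem \ref{thm: Dellacherie Thm Positive Meas} yields a set $B:=G_\bP\in\cG$ with $\bP(A\setminus B)=0$ for every $A\in\cG$. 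By construction $\tau_B$ is thin and $B\subseteq\{\tau<\infty\}$, and since $B\cap B^c=\emptyset$ the identity $\tau=\tau_B\wedge\tau_{B^c}$ holds pointwise.

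It remains to show that $\tau_{B^c}$ is thick. The main (and only non-routine) point is to verify, for an arbitrary stopping time $\sigma$, that the set
$$B':=B\cup\{\tau=\sigma<\infty\}$$
lies in $\cG$; the argument differs from Corollary \ref{cor: stop time decomp} precisely in that $\sigma$ need not be predictable, but this is not an issue since we only need thinness (not accessibility). Indeed $\llb \tau_{B'}\rrb \subseteq \llb \tau_B\rrb \cup \llb \sigma\rrb$, which is covered by countably many graphs of stopping times. Therefore $B'\in\cG$, and the maximality of $B$ gives $\bP(B^c\cap B')=0$. Since
$$B'\cap B^c=\{\tau=\sigma<\infty\}\cap B^c=\{\tau_{B^c}=\sigma<\infty\},$$
we conclude $\bP(\tau_{B^c}=\sigma<\infty)=0$, i.e.\ $\tau_{B^c}$ is thick. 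I expect this last step—choosing the correct enlargement $B'$ and translating maximality back into the thickness condition—to be the only substantive point; everything else is a direct transcription of the stopping-time case.
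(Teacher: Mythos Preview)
Your proposal is correct and follows exactly the approach the paper intends: the paper's own proof simply defines the same family $\cG$, observes $\emptyset\in\cG$, and then states that ``the rest of proof is the same as that of Corollary~\ref{cor: stop time decomp}, mutatis mutandis.'' You have carried out that mutatis mutandis in full detail, with the only change being that $\sigma$ ranges over all stopping times rather than predictable ones.
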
 
	
	\begin{proof} 
		Since the constant $\infty$ is a thin (and a thick) time, the family
		$$\cG:=\{B\in\cF: B\subseteq\{\tau<\infty \} \text{ and } \tau_B \text{ is thin} \}$$
		contains $\emptyset$, and so it is not empty. The rest of proof is  the same as that of Corollary \ref{cor: stop time decomp}, mutatis mutandis.
	\end{proof}
	
	\begin{remark}
		\label{}
		Of course we could, as it is done in the literature, prove that $A$  in Corollary \ref{cor: stop time decomp} is $\bP$-a.s. unique and belongs to $\cF_{\tau-}$, and that the decomposition of $\tau$ as $\alpha \wedge \beta$, where $\alpha$ is an accessible stopping time, $\beta$ is a totally inaccessible stopping time, and $\alpha \vee \beta=
		\infty$ $\bP$-a.s., is $\bP$-a.s.~unique; we do not do this, as we have nothing new to add in this regard. Analogously for Corollary \ref{cor:thin-thick}.
	\end{remark} 
	
	\begin{remark}
		Notice that, to obtain the decomposition of Corollary \ref{cor:thin-thick}, the authors in \cite[Sec.\ 5]{aksamit2021thin} need to prove some lemmas and theorems about dual optional projections, whereas we can more simply obtain it as a direct application of Theorem \ref{thm: Dellacherie Thm Positive Meas}.
	\end{remark}

	\medskip

	We will now list several more notable choices of $\cG$ in Theorem \ref{thm: Dellacherie Thm Positive Meas}, each one leading  to a  decomposition of a measure $\mu$ into its $\cG$-atomic and its $\cG$-diffuse part. In the following we assume, without repeating it, that $\cG$ is non-empty.

	\begin{enumerate}
		\item Let $\cF$ be the Borel $\sigma$-algebra on $\bR^n$ and $d_H$ be the Hausdorff dimension. Moreover, let $\cG:=\{G\in\cF: d_H(G)\in A\}$, where $A\subseteq [0,n]$ is a set closed under supremum. Then, the family $\cG\subseteq\cF$ is closed under countable unions due to the property of countable stability $d_H(\cup_k A_k)=\sup_k d_H(A_k)$ for all $A_k\in \cF$ of the Hausdorff dimension (see, e.g., \cite[shortly before Prop.\ 2.3]{Fal04}). In particular, taking $A=[0,n-1]$ we obtain that  the $\cG$-diffuse part of $\mu$ gives zero mass to every set of Hausdorff dimension at most $n-1$; such measures are considered in the theory of optimal transport, e.g.~\cite[Th.\ 2.12]{Vil03}, where such sets are called `small sets'.
		
		\item Let $\cF$ be the Borel $\sigma$-algebra on a locally compact Hausdorff space $\Omega$, then the family $\cG$ of Baire sets is closed under countable unions, and the $\cG$-atomic part of $\mu$ is the `Baire contraction' $\nu$ of  $\mu$ of (see \cite[Sec.\ 52, Th.\ H]{ha13}).
		\item Let $\cF$ be the Borel $\sigma$-algebra of a locally compact Hausdorff space $\Omega$, then the family $\cG$ of outer regular sets with respect to a measure $\mu$ is closed under countable unions (see \cite[Sec.\ 52, Th.\ C]{ha13}).
	\end{enumerate}

	Now we point out that there are two  operations which output suitable choices of $\cG$, i.e., a family of sets which is closed under countable unions. Then we list several choices of $\cG$ arising from the first, and then the second, such operation.
	
	\begin{enumerate}[(i)]
		\item Given a collection $\{\cG^i\}_{i\in I}$ of $\cG^i\subseteq \cF$ all closed under countable unions, then also $\cG:=\cap_i\cG^i$ is closed under countable unions. 
		\item Given a family $\cH\subseteq \cF$, then the family $\cG:=\cH^\sigma$ of all the countable unions of sets in $\cH$ is trivially closed under countable unions. 
	\end{enumerate}
	
	Here some choices of $\cG$ obtained via intersections.
	
	\begin{enumerate}
		\setcounter{enumi}{3}
		\item Let $\{\mu_i\}_{i\in I}$ be measures and $\cG^i$ be the family of $\mu_i$-null sets, then $\cG:=\cap_i\cG^i$ is the family of the polar sets of $\{\mu_i\}_{i \in I}$ (see, e.g., the definition in \cite[Sec.\ 1.1]{BoNu13}).
		\item Let $\nu$ be a measure on the Borel $\sigma$-algebra $\cF$ of a second-countable topological space $\Omega$, $\cG_1\subseteq\cF$ be the family of open sets and $\cG_2\subseteq\cF$ be the family of $\nu$-null sets. Then, by setting $\cG:=\cG_1\cap\cG_2$ we find that $\cG$ is closed under countable unions, and the second-countable assumption implies that $\cG$ admits a maximum $\bar{G}\in\cG$ for the pointwise order (see Remark \ref{rem: maximum set}); $\bar{G}^c$ is thus the topological support of $\nu$ (see, e.g., \cite[Sec.\ 12.3]{AlBo99}).
	\end{enumerate}
	\noindent Here some choices of $\cG$  obtained  taking countable unions. 
	\begin{enumerate}
		\setcounter{enumi}{5}
		\item Let $\cF$ be the Borel $\sigma$-algebra of a  topological space $\Omega$, and $\cH\subseteq\cF$ be the family of closed sets, then $\cG:=\cH^\sigma$ is the family of the so called $F_\sigma$ sets, i.e.,~countable unions of closed sets.
		\item Let $\cF$ be the Borel $\sigma$-algebra of a  topological space $\Omega$, and $\cH\subseteq\cF$ be the family of nowhere dense sets, then $\cG:=\cH^\sigma$ is the family of meagre sets (also called sets of first category; see, e.g.,~\cite[Sec.\ 3.11]{AlBo99}) .
		\item Let $\cF$ be the Borel $\sigma$-algebra on a metric space $\Omega$, $m\in\bN$ and $\cH\subseteq\cF$ be the family of $m$-rectifiable sets, then $\cG:=\cH^\sigma$ is the family of countably $m$-rectifiable sets (see, e.g., \cite[Sec.\ 3.2.14]{Fed14}). 
		\item 	Let $\cH$ be the collection of graphs of strictly decreasing Lipschitz functions, then the  family $\cG:=\cH^\sigma$ is closed under countable unions, so  the $\cG$-diffuse part of $\mu$ gives zero mass to the graphs of strictly decreasing Lipschitz functions. Such measures are considered in \cite[Th.\ 4.5]{KrYa19}. 
	\end{enumerate}

	\begin{remark}
		\label{re: uncountable unions}
		A natural generalisation of the construction $\cH^\sigma$ mentioned above would be, given any infinite cardinal number $o$, to consider the family $\cH^o$ of unions $\cup_{i\in I} H^i$ over a family of sets $H^i\in \cH, i\in I,$ where the index set $I$ has cardinality $|I|\leq o$. Indeed, $\cH^o$ is closed under countable unions (because any infinite set $I$ satisfies $|I \times I|=|I|$, see \cite[Th.\ 3.5]{Jec05}, and so $|I \times \bN | = |I|$), so in principle one could use it as a suitable choice for a family $\cG$. However, in general it will happen that $\cH^o$ contains non-measurable sets, since the $\sigma$-algebra $\cF$ is only closed under countable unions. For example, if  $\cH:=\{\{x\}: x\in \bR \}$ is the family of all singletons in $\bR$ then $\cH^{\aleph_1}$ is the family of \emph{all} subsets of $\bR$, which cannot be used to decompose a measure, whereas $\cH^{\aleph_0}=\cH^\sigma$ is the family of countable subsets of $\bR$, which leads to the decomposition into atomic and diffuse part of any measure defined on the Borel sets of $\bR$.
	\end{remark} 
	
	\medskip
	
	We have also come across the following instances where a family $\cG$ closed under countable unions, and which thus admits a maximum for $\leq_\mu$,  has been considered.
	
	\begin{enumerate}[(i)]
		\item In the proof of \cite[Lem.\ 2.3]{BrSc99}, the authors show that the family $\cB$ is closed under countable unions, and conclude that there is a set $\Omega_u$ of maximal measure in $\cB$,  which is unique up to $\bP$-null sets. Clearly $\Omega_u$ is the set $\bar{G}$ of Theorem \ref{thm: Dellacherie Thm Positive Meas} when $\mu=\bP$ and $\cG=\cB$.
		\item The proof of \cite[Prop.\ 2.4]{Kar12a} states that there exists a set $\Omega_w\in\cF$ satisfying the two dotted properties therein. 
		Clearly $\Omega_w$ is the set $\bar{G}$ of Theorem \ref{thm: Dellacherie Thm Positive Meas} when $\mu=\bP$ and 
		$$\cG:=\{G\in\cF: \bP(G\cap\{Y>0\})=0, \: \text{ for all }\: Y\in\cK\}:$$ 
		indeed, the first property immediately follows from the fact that $\bar{G}:=\Omega_w\in\cG$, and the second property follows from the fact that $\bP(A\cap\bar{G}^c)=0$ for all $A\in\cG$.
	\end{enumerate}
	
	\section{The decomposition theorem for vector measures}
	\label{se: decomposition vector measures}

	In this section we show how Theorem \ref{thm: Dellacherie Thm Positive Meas} can be extended to a very large class of vector measures. While, once identified the correct statement, the corresponding theorem is easy to prove, it is important because it will allow us to consider all of the most important and naturally occurring types of vector measures: the stochastic integral, the spectral measure, as well as any Banach-valued or $L^0(\bP)$-valued vector measure. 
	In turn, this will allow us to derive an interesting decomposition of semimartingales in Section \ref{se: stochastic integral}.
	We remark that while the notion of order between measures underlies Theorem \ref{thm: Dellacherie Thm Positive Meas}  in the case of real-measures, this is not so for vector measures, since (as far as we know) one cannot define (even for $\bR^2$-valued measures) an order on them such that two measures are singular with respect to such order if and only if they are concentrated on disjoint sets.
	
	We first introduce the notion of  vector measure on a measurable space $(\Omega,\cF)$.
	Let $V$ be a (Hausdorff)  Topological Vector Space (TVS); a $V$-valued  finitely-additive (vector) measure is a set function $\Theta:\cF\to V$ such that $\Theta(A\cup B)=\Theta(A)+\Theta(B)$ for any disjoint $A,B\in \cF$ (so in particular $\Theta(\emptyset)=0$).
	A $V$-valued  (vector) measure is a set function $\Theta:\cF\to V$  which is countably additive, i.e., such that the identity 
	\begin{equation}\label{VectMeasCountAdd}
		\Theta(\bigcup_{n=1}^\infty A_n)=\sum_{n=1}^\infty \Theta(A_n),
	\end{equation}
	holds for any sequence $A_n \in \cF, n\in\bN$ of pairwise disjoint sets, where $\sum_{n=1}^\infty \Theta(A_n)$ is the  limit of $\sum_{n=1}^k \Theta(A_n)$ as $k\to\infty$. 
	The simplest interesting case of a vector measure is obtained taking for $V$ the real line $\bR$, in which case we will speak of a \emph{real-measure}. Working component by component, one can use them to treat  $\bR^n$-valued, and complex-valued, measures.
	Of special interest are  Banach-valued measures, whose theory (both for finitely-additive and countably-additive measures) is presented  in \cite{DuSc57,DiestUhl:77}; aspects of the (much more complicated) case of measures with values in a general TVS are considered in  \cite{DrLa13}, whereas  \cite{Rol72} considers the important case of metric linear spaces (like $L^0(\bP)$).
	\begin{definition}
		\label{def: null set}
		We will say that a finite positive measure $\theta$ on $(\Omega,\cF)$ is a \emph{control} measure for a vector measure $\Theta$ on $(\Omega,\cF)$ if $\Theta \ll \theta$, i.e., if  every $\theta$-null set is a $\Theta$-null set. We will say that $\theta$ is equivalent to $\Theta$ if they have the same null sets.
	\end{definition} 
	We warn the reader that the definition of control measure can change slightly from one source to another. Though we will not need it, we should anyway mention the fact that every $\theta$-null set is a $\Theta$-null set is equivalent to asking that $\lim _{\theta(E) \rightarrow 0} \Theta(E)=0$ if $\Theta$ is Banach-valued (see \cite[Ch.\ 1, Sec.\ 2, Th.\ 1]{DiestUhl:77}); it is probably true in general, but we were not able to locate a corresponding reference.

	Given the definition of control measure, we can introduce a simple extension of Theorem \ref{thm: Dellacherie Thm Positive Meas} to vector measures. 
	\begin{theorem}\label{DellacherieThmVectMeas}
		Let $\cG \subseteq \cF$, and $V$ be any (Hausdorff) topological vector space. Then, 
		\begin{enumerate}
			
			\item \label{Maximum set Dellacherie Decomp Equivalence vector}
			Item \ref{Maximum set Dellacherie Decomp Equivalence} of Theorem \ref{thm: Dellacherie Thm Positive Meas} holds if the positive finitely-additive measures $\mu, \mu_\cG,\mu_\cG^\perp$  are replaced by finitely-additive $V$-valued (vector) measures  $\Theta, \Theta_\cG,\Theta_\cG^\perp$.
			
			\item If the  $V$-valued countably-additive measure  $\Theta$ admits a control measure $\theta$, and $\cG$ is non-empty and closed under countable unions, then there exists a set $G_\Theta\in\cG$  such that $G \setminus G_\Theta$ is a $\Theta$-null set for every $G\in \cG$ and,   up to $\Theta$-null sets, $G_\Theta$ is unique and equals $G_\theta$.
		\end{enumerate}

	\end{theorem}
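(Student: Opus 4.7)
The plan is to reduce to Theorem \ref{thm: Dellacherie Thm Positive Meas} whenever possible, since the only new ingredient is that the range of the measure is now an arbitrary TVS rather than $[0,\infty]$. The key observation is that Definition \ref{def: null set} is \emph{hereditary}: a measurable subset of a $\Theta$-null set is automatically $\Theta$-null. This is exactly the feature that made the arguments of item \ref{Maximum set Dellacherie Decomp Equivalence} of Theorem \ref{thm: Dellacherie Thm Positive Meas} go through, and it is what will let me run them essentially verbatim in the vector setting.

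For item \ref{Maximum set Dellacherie Decomp Equivalence vector}, I would argue as follows. For the implication (b)$\Rightarrow$(a), I use that $\Theta_\cG$ is concentrated on $\bar G$ and that every $G\in\cG$ (in particular $\bar G$) is $\Theta_\cG^\perp$-null, to deduce by direct computation that $\Theta_\cG=\Theta(\bar G\cap\cdot)$ and $\Theta_\cG^\perp=\Theta(\bar G^c\cap\cdot)$; then $G\setminus\bar G$ is a measurable subset both of $\bar G^c$ and of $G$, so it is both $\Theta_\cG$-null and $\Theta_\cG^\perp$-null, hence $\Theta$-null, which gives (a) with $G_\Theta:=\bar G$. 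For the converse, I set $\bar G:=G_\Theta$, $\Theta_\cG:=\Theta(\bar G\cap\cdot)$ and $\Theta_\cG^\perp:=\Theta(\bar G^c\cap\cdot)$; both are finitely-additive $V$-valued measures as restrictions of $\Theta$, $\Theta_\cG$ is concentrated on $\bar G$ by construction, and for any $G\in\cG$ and any measurable $H\subseteq G$ one has $H\cap\bar G^c\subseteq G\setminus\bar G$, a $\Theta$-null set, so $\Theta_\cG^\perp(H)=\Theta(H\cap\bar G^c)=0$. This shows $G$ is $\Theta_\cG^\perp$-null. Uniqueness of $G_\Theta$ up to $\Theta$-null sets, and of the decomposition, follows by two applications of the defining property, as in the positive case.

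For item 2, the plan is to let the control measure $\theta$ do the work. Since $\theta$ is a finite positive countably-additive measure, it is in particular $\sigma$-finite, so item \ref{exists max set} of Theorem \ref{thm: Dellacherie Thm Positive Meas}, applied to $\theta$ and $\cG$, produces a set $G_\theta\in\cG$ with $G\setminus G_\theta$ being $\theta$-null for every $G\in\cG$. Because $\Theta\ll\theta$, every $\theta$-null set is $\Theta$-null, so $G_\theta$ already satisfies the requirement for $\Theta$, and I can take $G_\Theta:=G_\theta$. The claim that $G_\Theta$ is unique up to $\Theta$-null sets and coincides with $G_\theta$ then follows from a standard symmetric-difference argument: if $G'$ is any other such set, then applying the defining property to $G'$ (with $G:=G_\theta$) and to $G_\theta$ (with $G:=G'$) shows that both $G_\theta\setminus G'$ and $G'\setminus G_\theta$ are $\Theta$-null.

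The main (mild) subtlety I expect is bookkeeping around the notion of null set: for a vector measure it is essential to use the hereditary Definition \ref{def: null set} rather than the weaker condition $\Theta(\cdot)=0$, otherwise the step showing $\Theta_\cG^\perp(H)=0$ for measurable $H\subseteq G\in\cG$ would fail. Once the correct definitions are in place, no essentially new idea beyond those of Theorem \ref{thm: Dellacherie Thm Positive Meas} is required, which is precisely why the vector extension is painless and broad.
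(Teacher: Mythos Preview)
Your proposal is correct and follows essentially the same route as the paper: item~\ref{Maximum set Dellacherie Decomp Equivalence vector} is argued exactly as in the positive case (the paper simply says the proof is ``absolutely identical''), and for item~2 you apply Theorem~\ref{thm: Dellacherie Thm Positive Meas} to the control measure $\theta$ and then use $\Theta\ll\theta$ to transfer the conclusion to $\Theta$, which is precisely the paper's argument. Your remark about the hereditary nature of null sets and the symmetric-difference uniqueness argument just spell out details the paper leaves implicit.
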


	\begin{proof}   
		The proof of item \ref{Maximum set Dellacherie Decomp Equivalence vector} is absolutely identical to the case of positive finitely-additive measures. As for the other item, applying Theorem \ref{thm: Dellacherie Thm Positive Meas} to $\cG$ and the control measure $\theta$, we obtain a set $G_\theta\in\cG$ such that, for every $G\in \cG$,  $G_\theta^c \cap G=G \setminus G_\theta$ is a $\theta$-null set; as $\theta$ is a control measure for $\Theta$, by taking $G_\Theta:=G_\theta$, we obtain the existence of $G_\Theta$, which is trivially unique up to $\Theta$-null sets. 
	\end{proof}

	\begin{remark}\label{rmk:ContrMeas}
		Of course, for Theorem \ref{DellacherieThmVectMeas} to be of any use, we need to show that many vector measures admit a control measure; this is indeed the case, as we will now explain.
		The variation of a measure is an equivalent control measure for any real measure. More generally, 
		an equivalent control measure (with additional properties) exists for any Banach-valued measure: see e.g.~\cite[Ch.\ 1, Sec.\ 2, Cor.\ 6]{DiestUhl:77} or \cite[Ch.\ 4, Sec.\ 10, Lem.\ 5]{DuSc57}. 
		More generally still, \cite[Cor.\ 2]{musial1973absolute} shows that every measure with values in a metrizable locally-convex space admits an equivalent control measure.
		Moreover, even if the TVS $L^0(\bP)$  is neither locally convex nor locally bounded, it is true (though hard to prove) that any $L^0$-valued measure is convexly-bounded, and using this one can prove that any such $\Theta$ admits a control measure (see \cite[App.\ B, Th.\ B.2.2]{KwaWoy92}, or \cite[Th.\ B]{talagrand1981mesures}). This allows  to apply, in Section \ref{se: stochastic integral}, our Theorem \ref{DellacherieThmVectMeas}  to the special case  where the $L^0$-valued measure is the stochastic integral with respect to a  semimartingale (in this case  a control measure can actually be built somewhat explicitly, so its existence is easier to prove, as we  will explain in Section \ref{se: stochastic integral}). 
		Finally, we will prove in Section \ref{se: the spectral measure} that any spectral measure admits a control measure.
	\end{remark}
	
	\medskip

	To showcase the use of Theorem \ref{DellacherieThmVectMeas}, we will now use it to recover  Hahn-Jordan's decomposition of a real measure $\mu$; the proof is of course related to the classic proof found in \cite[Sec.\ 29]{ha13}, since essentially we break up their proof in parts, as follows. Our upcoming Lemma \ref{LemmaPositiveSets} states that, given any set $A$ of strictly positive measure, the family $A_{>}$ of its non-trivial subsets of $\mu$-positive measure is non-empty; since $A_{>}$ is closed under countable unions, Theorem \ref{DellacherieThmVectMeas} then shows that  $A_{>}$ has a maximum with respect to $\leq_\mu$ (see Remark \ref{rem: ae order}), from which it is then easy to prove Corollary \ref{co: Hanh-Jordan decomposition}. 
	
	Given a real measure $\mu$ on a measurable space $(\Omega,\cF)$, we  say that $A\in\cF$ is a \emph{positive set} for $\mu$ (or a $\mu$-\textit{positive} set) if, for every $B\in\cF$ such that $B\subseteq A$, we have $\mu(B)\geq 0$. Analogously, we say that $A\in\cF$ is a \emph{negative set} for $\mu$ if it is a positive set for $-\mu$.
	
	\begin{lemma}\label{LemmaPositiveSets}
		Let $\mu$ be a real measure on $(\Omega,\cF)$ and $A\in\cF$ such that $\mu(A)>0$. Then, there exists $\bar{A}\in\cF$ such that $\bar{A}\subseteq A$, $\bar{A}$ is $\mu$-positive, and is not $\mu$-null (i.e., $\mu(\bar{A})>0$).
	\end{lemma}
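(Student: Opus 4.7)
The plan is to use the classical greedy construction that underlies the usual proof of the Hahn decomposition theorem. The idea is to carve off from $A$ a countable collection of disjoint subsets whose measures are as negative as possible; what remains is then forced to be $\mu$-positive, and its $\mu$-mass is at least $\mu(A) > 0$ since we have only removed sets of negative measure.

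Concretely, I would proceed by induction. Set $A_0 := A$; at stage $k \geq 1$, let $n_k$ be the smallest positive integer such that some $B \in \cF$ with $B \subseteq A_{k-1}$ satisfies $\mu(B) < -1/n_k$. Pick such a $B_k$ and set $A_k := A_{k-1} \setminus B_k$. If at some stage no such $B$ exists, then $A_{k-1}$ is already $\mu$-positive, and $\mu(A_{k-1}) = \mu(A) - \sum_{j < k} \mu(B_j) \geq \mu(A) > 0$; so $\bar{A} := A_{k-1}$ does the job. Otherwise the process runs forever, and I take $\bar{A} := A \setminus \bigcup_{k \geq 1} B_k$; by countable additivity and the disjointness of the $B_k$, $\mu(\bar{A}) = \mu(A) - \sum_k \mu(B_k) \geq \mu(A) > 0$.

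The remaining step is to verify that $\bar{A}$ is $\mu$-positive. Since $\mu$ takes real (finite) values, $\mu(\bigcup_k B_k)$ is finite, and the bound $\mu(B_k) < -1/n_k < 0$ forces $\sum_k 1/n_k < \infty$, so $n_k \to \infty$. If some $C \subseteq \bar{A}$ with $C \in \cF$ satisfied $\mu(C) < 0$, then $\mu(C) < -1/m$ for some integer $m$; but for $k$ large one has $n_k - 1 > m$, so $C \subseteq \bar{A} \subseteq A_{k-1}$ would be a set with $\mu(C) < -1/(n_k - 1)$, contradicting the minimality of $n_k$.

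The main (and essentially only) obstacle is justifying the divergence $n_k \to \infty$; everything else is a straightforward bookkeeping of measures along the construction. The finiteness of the real-valued measure $\mu$ is exactly what makes this work, and it is also what keeps the lemma within the scope of Theorem \ref{DellacherieThmVectMeas} when we next use it to derive the Hahn--Jordan decomposition.
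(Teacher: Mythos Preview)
Your proof is correct and is essentially the same as the paper's: both carry out the classical greedy construction, removing at each stage a subset $B_k\subseteq A_{k-1}$ with $\mu(B_k)<-1/n_k$ for the minimal such $n_k$, and then argue that the leftover $\bar{A}$ is $\mu$-positive via $n_k\to\infty$ and the minimality of $n_k$. The only cosmetic difference is that you deduce $n_k\to\infty$ from $\sum_k 1/n_k<\infty$, whereas the paper deduces it from $\mu(B_k)\to 0$; both rest on the convergence of $\sum_k \mu(B_k)$.
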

	
	\begin{proof}
		If $A_0:=A$ is positive then simply take $\bar{A}:=A_0$. Otherwise, we proceed by induction. If $A_{k-1},k\in \bN^*:=\bN \setminus \{0\}$ is not positive, let $n_k\in\bN^*$ be  the smallest integer for which there exists $B_k\in\cF$ such that $B_k\subseteq A_{k-1}$ and $\mu(B_k)<-1/n_k$. Choose one such $B_k$, and define $A_{k}:=A\setminus \cup_{i=1}^{k}B_i$; notice that $A_k \subseteq A_{k-1}$ implies $n_k\geq n_{k-1}$ for every $k\in \bN$. If there exists the smallest $K\in \bN$ such that $A_K$ is positive we can take $\bar{A}:=A_K$, and the thesis follows since the $\{B_k\}_k$ are disjoint and so
		\begin{align}
			\label{eq: mu A bigger}
			\textstyle 
			\mu(\bar{A})=\mu(A)-\sum_{k=1}^K \mu(B_k)>\mu(A)>0 .
		\end{align}
		Assume no $\{A_k\}_{k\in \bN}$ is positive; then the thesis follows taking 
		$\bar{A}:=A_{\infty}:=\cap_{k=0}^\infty A_k$, as we will now prove. Since $\{B_k\}_k$ are disjoint, the series $\sum_{k=1}^\infty \mu(B_k)$ converges, so  \eqref{eq: mu A bigger} holds with $K=\infty$, and $\lim_{k\to\infty}\mu(B_k)=0$ and thus  $\lim_{k\to\infty}1/n_k=0$. 
		To show that $\bar{A}$ is positive, assume by contradiction that there exists $N\subseteq\bar{A}$ such that $\mu(N)<0$. Since $\lim_{k\to\infty}1/n_k=0$,  there exists $k\in\bN$ such that $\mu(N)<-1/(n_k-1)$, and  since $N\subseteq\bar{A}\subseteq A_{k-1}$, the minimality property of $n_k$ is violated by $n_k-1$,  a contradiction.
	\end{proof}

	\begin{corollary}[Hahn-Jordan decomposition]
		\label{co: Hanh-Jordan decomposition}
		If $\mu$ is a real measure on $(\Omega,\cF)$,  there exists a $\mu$-positive set $\bar{G}\in \cF$ whose complement $\bar{G}^c$ is a $\mu$-negative set; such $\bar{G}$ is unique, up to $\mu$-null sets. In particular, setting 
		$$\mu^{+}:=\mu(\cdot \cap \bar{G}), \quad 
		\mu^{-}:=-\mu(\cdot \cap \bar{G}^c)$$
		one obtains 
		two finite positive mutually singular measures $\mu^+$ and $\mu^-$ such that $\mu=\mu^+-\mu^-$; such a decomposition of $\mu$ is unique.
	\end{corollary}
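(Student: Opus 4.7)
The plan is to apply Theorem \ref{DellacherieThmVectMeas} with $\cG\subseteq\cF$ equal to the family of all $\mu$-positive sets, obtaining a $\leq_\mu$-maximum $\bar{G}\in\cG$; Lemma \ref{LemmaPositiveSets} will then force $\bar{G}^c$ to be $\mu$-negative, after which setting $\mu^\pm$ as in the statement gives the decomposition, and uniqueness will follow quickly from the uniqueness clause of Theorem \ref{DellacherieThmVectMeas}.

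First I check the hypotheses of Theorem \ref{DellacherieThmVectMeas} on $\cG$. Clearly $\emptyset\in\cG$, so $\cG$ is non-empty. For closure under countable unions, given $A_n\in\cG$ for $n\in\bN$, I write $\bigcup_n A_n$ as the disjoint union $\bigsqcup_n B_n$ with $B_n := A_n\setminus\bigcup_{k<n}A_k\subseteq A_n$; since measurable subsets of $\mu$-positive sets are $\mu$-positive, each $B_n$ is $\mu$-positive, hence for any $C\in\cF$ with $C\subseteq\bigcup_n A_n$ countable additivity gives $\mu(C)=\sum_n\mu(C\cap B_n)\geq 0$, so $\bigcup_n A_n\in\cG$. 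A control measure for the real measure $\mu$ is furnished by the results cited after Theorem \ref{DellacherieThmVectMeas} (a real measure is a $\bR$-valued, and hence Banach-valued, measure). Theorem \ref{DellacherieThmVectMeas} then supplies a set $\bar{G}\in\cG$, unique up to $\mu$-null sets, such that $G\setminus\bar{G}$ is $\mu$-null for every $G\in\cG$.

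The main step is showing that $\bar{G}^c$ is a $\mu$-negative set; this is where Lemma \ref{LemmaPositiveSets} enters. Suppose to the contrary that there exists $B\in\cF$ with $B\subseteq\bar{G}^c$ and $\mu(B)>0$. By Lemma \ref{LemmaPositiveSets}, $B$ contains a $\mu$-positive set $\bar{B}$ with $\mu(\bar{B})>0$. Since $\bar{G}$ and $\bar{B}$ are disjoint and both $\mu$-positive, the previous paragraph gives $\bar{G}\cup\bar{B}\in\cG$; then the defining property of $\bar{G}$ would require $(\bar{G}\cup\bar{B})\setminus\bar{G}=\bar{B}$ to be $\mu$-null, contradicting $\mu(\bar{B})>0$.

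Finally, set $\mu^{+}:=\mu(\cdot\cap\bar{G})$ and $\mu^{-}:=-\mu(\cdot\cap\bar{G}^c)$: these are positive finite measures (by $\mu$-positivity of $\bar{G}$ and $\mu$-negativity of $\bar{G}^c$), concentrated respectively on $\bar{G}$ and $\bar{G}^c$, hence mutually singular, and visibly $\mu=\mu^+-\mu^-$. For uniqueness, suppose $\mu=\nu^+-\nu^-$ with $\nu^\pm$ positive and mutually singular, concentrated on disjoint $P,N$ with $P\cup N=\Omega$ (which we may assume without loss of generality). Then $P$ is $\mu$-positive, so $P\in\cG$ and $P\setminus\bar{G}$ is $\mu$-null; conversely $\bar{G}\setminus P=\bar{G}\cap N$ is simultaneously a subset of the $\mu$-positive set $\bar{G}$ and of the $\mu$-negative set $N$, hence $\mu$-null. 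Thus $P=\bar{G}$ up to $\mu$-null sets and $\nu^\pm=\mu^\pm$. The only real obstacle is the negativity of $\bar{G}^c$, where one must combine maximality of $\bar{G}$ with the existence result of Lemma \ref{LemmaPositiveSets}; the rest is bookkeeping.
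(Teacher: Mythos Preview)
Your proof is correct and follows essentially the same route as the paper: take $\cG$ to be the family of $\mu$-positive sets, apply Theorem \ref{DellacherieThmVectMeas} to obtain the $\leq_\mu$-maximum $\bar{G}$, and use Lemma \ref{LemmaPositiveSets} to derive a contradiction showing $\bar{G}^c$ is negative. The only cosmetic differences are that the paper phrases the contradiction via the property $\mu_\cG^\perp(G)=0$ for all $G\in\cG$ (item \eqref{DellacherieDecompVectMeas}) rather than via the equivalent maximality statement (item \eqref{Maximum set}) you use, and the paper proves uniqueness of $\bar{G}$ by a direct symmetric-difference argument rather than invoking the uniqueness clause of Theorem \ref{DellacherieThmVectMeas}.
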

	\begin{proof} If $\mu$ or $-\mu$ is positive the theorem is trivially true, so let us assume otherwise.
		Let $\cG$ be the family of positive sets, which is closed under countable unions; by Lemma \ref{LemmaPositiveSets}, $\cG$ is non-empty. 
		Thus, Theorem \ref{DellacherieThmVectMeas} provides a positive set $\bar{G}\in\cG$ and  unique decomposition $\mu=\mu_\cG+\mu_\cG^\perp$ where $\mu^+:=\mu_\cG:=\mu(\cdot\cap\bar{G})$ and $\mu^-:=\mu_\cG^\perp:=\mu(\cdot \cap\bar{G}^c)$, such that  $\mu^-(G)=0$ for every $G\in\cG$. 
		Since  $\bar{G}$ is positive,  $\mu^+$ is a positive measure; we will now show that $\bar{G}^c$ is negative, concluding the proof of existence. 
		Assume by contradiction that there exists $A\in\cF$ such that $\mu^-(A)>0$ then, by Lemma \ref{LemmaPositiveSets}, there exists $\bar{A}\in\cF$ such that $\bar{A}\subseteq A$, $\bar{A}$ is positive (i.e., $\bar{A}\in\cG$) and $\mu^-(\bar{A})>0$, a contradiction.
		
		If $\bar{G}$ and $\tilde{G}$ are positive sets with negative complements, the sets 
		$\bar{G}\setminus \tilde{G}$ and  $\tilde{G}\setminus \bar{G}$ are  both positive and negative, and thus they are  $\mu$-null, proving the uniqueness of $\bar{G}$, and thus that of $\mu=\mu^+ - \mu^-$ (since a $\mu^-$-null  set on which $\mu^+$ is concentrated is $\mu$-positive and has $\mu$-positive complement). 
	\end{proof}
	
	\section{The stochastic integral and a decomposition of semimartingales}
	\label{se: stochastic integral}
	The most important example of a vector measure is represented by the stochastic integral with respect to a semimartingale, whose theory is developed, e.g., in the books \cite{Bi02}, \cite{Pro04}, \cite{DeMeB}. Such integral $I_X:H \mapsto (H \cdot X)_\infty$ is a $L^0(\bP)$-valued measure, since it satisfies the (stochastic) dominated convergence theorem; conversely, the celebrated theorem by Bichteler and Dellacherie (for an elementary proof see  \cite{BeSi14}) essentially states that if the integral with respect to a process $X$ is a $L^0$-valued measure,  then  $X$ is a semimartingale. Given a semimartingale $X$, the stochastic integral can also be considered as the map $J_X:H \mapsto (H \cdot X)$, which takes values in the space $\cS$ of semimartingales (so that $J_X(H)_\infty=I_X(H)$), and since $J_X$ also satisfies the dominated convergence theorem, it is also a ($\cS$-valued) vector measure.
	
	If $p\in [1,\infty)$, the previous statements admit the following much less well-known variant (see \cite[Th.\ 3.1]{yor79}): the stochastic integral $I_X$ with respect to $X$ is a $L^p$-valued measure if and only if  $X$ is a $\cS^p$-semimartingale.  In this case it is easy\footnote{By applying the Burkholder-David-Gundy inequality to the local martingale part $M$, this boils down to showing that $\ind_P \cdot [M]=0$ if and only if $\ind_P \cdot [M]^{\frac{1}{2}}=0$, where $P$ is a predictable set (see \cite[Th.\ 3.4]{yor79}). That this holds can be proved easily, since $t \mapsto t^{1/2}$ is strictly increasing on $[0,\infty)$; otherwise one can apply the more delicate \cite[Lem.\ 2.26]{LoObPrSi21}, since $t \mapsto t^{1/2}$ and $t \mapsto t^{2}$ are absolutely continuous on every $[a,b]\subseteq [0,\infty)$.} to see that
	$$
	\theta(\ud s\times \ud \omega):=(\ud [M]_s^{\frac 1 2}+\ud |A|_s)\ud\bP(\omega)
	$$
	is an equivalent control measure (for both $I_X$ and $J_X$, though now considered as having values in $L^p$ and $\cS^p$), where $X=M+A$ is the canonical decomposition of $X$. This allows to easily construct somewhat explicitly a control measure for an arbitrary  semimartingale $X$, as follows: by \cite[Ch.\ 4, Th.\ 34]{Pro04} there exists $\bQ\sim\bP$ such that $X$ is an $\cS^2(\bQ)$-semimartingale. Thus, $X$ can be canonically decomposed as $X=M+A$ under $\bQ$, and
	\begin{equation}\label{StochIntContrMeas}
		\theta(\ud s\times\ud \omega):=(\ud [M]_s^\frac{1}{2}+\ud |A|_s)\ud \bQ(\omega)
	\end{equation}
	is clearly an equivalent control measure for both $I_X$ and $J_X$.
	
	\medskip
	
	Among the possible applications of Theorem \ref{DellacherieThmVectMeas} to the stochastic integral, we show how to use it to obtain a decomposition that appears in \cite[Th.\ 4.4.5]{Bi02} in a more elementary way; we will also see how to obtain a slightly stronger result (see Remark \ref{rem: predictable support}). We need to introduce some definitions.  As usual, we always consider not sets, but rather their equivalence classes: any set $F \subseteq \Omega \times [0,\infty)$ is identified with the process $\ind_F$, and so sets $F,\tilde{F}$ are identified (and we write $F=\tilde{F}$) if $\ind_F=\ind_{\tilde{F}}$ a.s.~for all $t$ i.e., if the set $\{\ind_F \neq \ind_{\tilde{F}}\}$, which can more explicitly be written as 
	$$\hspace{-0.2 cm} 
	\{\omega: \ind_F(\omega,t) \neq \ind_{\tilde{F}}(\omega,t)\text{ for some } t\geq 0\}=\{\omega: (\omega,t)\in F \Delta \tilde{F}\text{ for some } t\geq 0\},$$
	is $\bP$-null; analogously $F\subseteq \tilde{F}$ actually means  that $\ind_F\leq \ind_{\tilde{F}}$ holds (i.e., it holds a.s.~for all $t$), and this we call the pointwise order for processes (and so for subsets of $\Omega \times [0,\infty)$). Given a semimartingale $X$, we can then consider the $X$-a.e. order given by $F\leq_X \tilde{F}$ if the semimartingale $ (\ind_{F \setminus \tilde{F}})\cdot X$ equals $0$, and identity sets $F,\tilde{F}$ which are equal $X$-a.e., i.e., such that $\{\ind_F \neq \ind_{\tilde{F}}\}$ is a $X$-null set.
	
	We will say that a set $F\subseteq \Omega \times [0,\infty)$  is \emph{thin} if $F=\cup_n\llb \tau_n \rrb$ for some stopping times $(\tau_n)_{n\in\bN}$; we call such $F$ a \emph{thin predictable}\footnote{This terminology is justified by the fact that a set $F$ is thin and predictable if and only if it is a  thin predictable set (we will not need this fact).} 
	set if $F=\cup_n\llb \tau_n \rrb$ for some predictable stopping times $(\tau_n)_{n\in\bN}$, and a
	\emph{thin totally-inaccessible} set if $F=\cup_n\llb \tau_n \rrb$ for some totally-inaccessible stopping times $(\tau_n)_{n\in\bN}$. We will use that the set of jumps $F:=\{\Delta X\neq 0 \}$ of a cadlag adapted process $X$ is  a thin set, and it is a  thin predictable set if $X$ is predictable (see, e.g., \cite[Prop.\ 1.32]{JaSh02} or \cite[Th.\ 3.1]{Sio13DM}).
	We will say that a cadlag process $X$ has \emph{predictable jump times} if $\{\Delta X\neq 0\}$ is thin predictable,  and that it has \emph{totally-inaccessible jump times} if $\{\Delta X\neq 0\}$ is thin totally-inaccessible (this is easily seen to be equivalent to $X$ being quasi-left-continuous).

	\begin{lemma}
		\label{le: single jump}
		If $Z$ is a semimartingale and $\tau$ a predictable stopping time then $\ind_{\llb\tau\rrb}\cdot Z=\Delta Z_{\tau}\ind_{[\tau, \infty)}$.
	\end{lemma}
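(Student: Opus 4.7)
My plan is to decompose the graph $\llb\tau\rrb$ as the difference of two stochastic intervals and use standard identities for the stopped stochastic integral. Writing $\ind_{\llb\tau\rrb}=\ind_{[\tau,\infty)}-\ind_{(\tau,\infty)}$ (as sets in $\Omega\times[0,\infty)$) and using linearity of the stochastic integral, one has
\begin{equation*}
\ind_{\llb\tau\rrb}\cdot Z=\ind_{[\tau,\infty)}\cdot Z-\ind_{(\tau,\infty)}\cdot Z.
\end{equation*}
For any stopping time $\sigma$, the process $\ind_{(\sigma,\infty)}$ is left-continuous and adapted, hence predictable, and the elementary identity $\ind_{(\sigma,\infty)}\cdot Z=Z-Z^\sigma$ holds. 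Applying this with $\sigma=\tau$ handles the second term.

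To treat the first term, I would exploit the predictability of $\tau$ by picking an announcing sequence of stopping times $\tau_n\nearrow\tau$ with $\tau_n<\tau$ on $\{\tau>0\}$. Then $\ind_{(\tau_n,\infty)}\nearrow\ind_{[\tau,\infty)}$ pointwise, all integrands being bounded by $1$; the dominated convergence theorem for the stochastic integral (which is precisely the statement that $J_Z$ is an $\cS$-valued vector measure, as recalled in Section \ref{se: stochastic integral}) yields
\begin{equation*}
\ind_{[\tau,\infty)}\cdot Z=\lim_{n\to\infty}\ind_{(\tau_n,\infty)}\cdot Z=\lim_{n\to\infty}(Z-Z^{\tau_n})=Z-Z^{\tau-},
\end{equation*}
where $Z^{\tau-}$ denotes the process pre-stopped at $\tau$, namely $Z^{\tau-}_t=Z_t\ind_{\{t<\tau\}}+Z_{\tau-}\ind_{\{t\geq\tau\}}$; the identification of the limit uses that $\tau_n\nearrow\tau$ strictly on $\{\tau>0\}$ together with the existence of left limits of $Z$.

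Combining the two computations gives
\begin{equation*}
\ind_{\llb\tau\rrb}\cdot Z=(Z-Z^{\tau-})-(Z-Z^\tau)=Z^\tau-Z^{\tau-},
\end{equation*}
which equals $0$ on $[0,\tau)$ and equals $Z_\tau-Z_{\tau-}=\Delta Z_\tau$ on $[\tau,\infty)$, i.e.\ $\Delta Z_\tau\ind_{[\tau,\infty)}$, as claimed. The only step requiring genuine care -- and the main obstacle -- is the invocation of the dominated convergence theorem for the stochastic integral together with the identification of $\lim_n Z^{\tau_n}$ with $Z^{\tau-}$; everything else is a routine algebraic manipulation within the semimartingale calculus.
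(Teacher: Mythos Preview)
Your proof is correct and follows essentially the same approach as the paper: both rely on an announcing sequence for $\tau$, the identity $\ind_{(0,\sigma]}\cdot Z=Z^{\sigma}$ (equivalently $\ind_{(\sigma,\infty)}\cdot Z=Z-Z^{\sigma}$), and the stochastic dominated convergence theorem. The only difference is organizational: the paper writes $\llb\tau\rrb$ directly as the decreasing limit of the intervals $(\tau_n,\tau]$ and passes to the limit in $\ind_{(\tau_n,\tau]}\cdot Z=Z^{\tau}-Z^{\tau_n}$, whereas you first split $\ind_{\llb\tau\rrb}=\ind_{[\tau,\infty)}-\ind_{(\tau,\infty)}$ and then use the announcing sequence only to identify $\ind_{[\tau,\infty)}\cdot Z$; the computations are otherwise identical.
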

	\begin{proof}
		Let $(\tau_{n})_{n\in \bN}$ be an increasing sequence of stopping times converging to $\tau$ and such that
		$\tau_{n}<\tau$ on $\{\tau>0\}$ for all $n$ (it exists: see \cite[Cor.\ 2.1]{Sio13DM}). Applying to $\tau_n$ and to $\tau$ the well known identity $\ind_{(0,\sigma]}\cdot Z=Z_{\sigma\wedge \cdot}$, valid for any stopping time $\sigma$, and subtracting the two we get $\ind_{(\tau_n,\tau]}\cdot Z=Z_{\tau\wedge \cdot}- Z_{\tau_{n}\wedge \cdot}$, and taking the limit as $n\to \infty$ by the stochastic dominated converge theorem we get the thesis.
	\end{proof} 
	
	\begin{corollary}\label{BichtDecompThm}
		Given a semimartingale $X$, there exists a thin predictable set $P$ such that 
		\begin{equation}\label{BichtelerDecomp}
			X=Y+Z,
		\end{equation}
		where $Y$ satisfies $Y=\ind_P \cdot Y$, and  $Z$ has totally-inaccessible jump times. Moreover, the decomposition \eqref{BichtelerDecomp} is unique.
	\end{corollary}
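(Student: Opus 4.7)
My plan is to apply Theorem \ref{DellacherieThmVectMeas} to the $\cS$-valued stochastic integral measure $J_X \colon B \mapsto \ind_B \cdot X$ on the predictable $\sigma$-algebra of $\Omega \times [0,\infty)$, taking $\cG$ to be the family of thin predictable sets. A countable union of countable unions of graphs of predictable stopping times is again of that form, so $\cG$ is closed under countable unions; and $J_X$ admits a control measure by \eqref{StochIntContrMeas}. The theorem will therefore produce a thin predictable set $P$ such that $\ind_{G \setminus P} \cdot X = 0$ for every $G \in \cG$, and this $P$ will be the one featured in the statement.

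For existence, I would set $Y := \ind_P \cdot X$ and $Z := \ind_{P^c} \cdot X$, so $X = Y + Z$. Associativity of the stochastic integral gives $\ind_P \cdot Y = (\ind_P \ind_P) \cdot X = \ind_P \cdot X = Y$. To check that $Z$ has totally-inaccessible jump times (equivalently, is quasi-left-continuous, as noted immediately before the statement), I would fix any predictable stopping time $\tau$: since $\llb \tau \rrb \cup P \in \cG$, the set $\llb \tau \rrb \cap P^c = (\llb \tau \rrb \cup P) \setminus P$ is $J_X$-null, and so by associativity and Lemma \ref{le: single jump},
\[
0 \;=\; \ind_{\llb \tau \rrb \cap P^c} \cdot X \;=\; \ind_{\llb \tau \rrb} \cdot (\ind_{P^c} \cdot X) \;=\; \ind_{\llb \tau \rrb} \cdot Z \;=\; \Delta Z_\tau \ind_{[\tau,\infty)},
\]
forcing $\Delta Z_\tau = 0$ $\bP$-a.s.~on $\{\tau < \infty\}$.

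For uniqueness, given any second decomposition $X = \tilde Y + \tilde Z$ with $\tilde Y = \ind_{\tilde P} \cdot \tilde Y$ for some thin predictable $\tilde P$ and $\tilde Z$ quasi-left-continuous, the key observation is that $W := Y - \tilde Y = \tilde Z - Z$ is simultaneously quasi-left-continuous (as a difference of quasi-left-continuous processes) and satisfies $\ind_{P \cup \tilde P} \cdot W = W$: indeed, applying associativity to $\ind_{P \cup \tilde P} \cdot Y = \ind_{P \cup \tilde P} \cdot (\ind_P \cdot Y) = \ind_P \cdot Y = Y$ and likewise to $\tilde Y$. Disjointifying the defining sequence of $P \cup \tilde P$ by the standard trick $\llb \tilde\rho_n \rrb := \llb \rho_n \rrb \setminus \bigcup_{i<n} \llb \rho_i \rrb$ (whose graphs remain predictable as differences of predictable sets), Lemma \ref{le: single jump} together with the stochastic dominated convergence theorem will yield
\[
W \;=\; \ind_{P \cup \tilde P} \cdot W \;=\; \sum_n \Delta W_{\tilde\rho_n} \ind_{[\tilde\rho_n,\infty)} \;=\; 0,
\]
since each $\Delta W_{\tilde\rho_n}$ vanishes by quasi-left-continuity. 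Hence $Y = \tilde Y$ and $Z = \tilde Z$.

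The hard part will be the bookkeeping in the associativity steps—most notably in uniqueness, where one must observe that both $Y$ and $\tilde Y$ are absorbed by multiplication against $\ind_{P \cup \tilde P}$—and verifying the convergence in the infinite sum via stochastic dominated convergence. Once these mechanical points are in place, the rest of the argument is just a combination of Lemma \ref{le: single jump} with the defining property of $P$ supplied by Theorem \ref{DellacherieThmVectMeas}.
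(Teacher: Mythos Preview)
Your proof is correct and follows the paper's approach: apply Theorem \ref{DellacherieThmVectMeas} to $J_X$ with $\cG$ the family of thin predictable sets, set $Y := \ind_P \cdot X$ and $Z := \ind_{P^c} \cdot X$, and verify the required properties via associativity of the stochastic integral and Lemma \ref{le: single jump}. Your uniqueness argument is in fact more careful than the paper's---you work with $P \cup \tilde P$ and spell out the disjointification together with stochastic dominated convergence, whereas the paper tersely writes $W = \ind_P \cdot W$ (which, read literally, would require $\ind_P \cdot \hat Y = \hat Y$) and leaves the step $\ind_P \cdot W = 0$ for a quasi-left-continuous $W$ implicit.
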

	\begin{proof}
		Let $\cG$ be the family of  thin predictable sets; this is obviously non-empty and closed under countable unions. Since the vector measure $J_X$ admits a control measure (recall \eqref{StochIntContrMeas}), we can apply Theorem \ref{DellacherieThmVectMeas} and obtain `the largest' thin predictable set $P:=G_{J_X}\in\cG$, such that $\ind_{P^c\cap G}\cdot X=0$ for every $G\in\cG$. Let $Y:=\ind_P\cdot X$, $Z:=\ind_{P^c}\cdot X$. 
		Since $\ind_P \ind_P=\ind_P$, and the stochastic integral satisfies $H\cdot (K \cdot X)=(HK) \cdot X$, we get
		$$Y=\ind_P \cdot X=\ind_P^2\cdot X=\ind_P\cdot (\ind_P\cdot X)=\ind_P\cdot Y;$$
		moreover, if $\tau$ is a \emph{predictable} stopping time then $\llb \tau\rrb\in\cG$ and so $\ind_{P^c\cap \llb\tau\rrb}\cdot X=0$. Thus,
		$$0=\ind_{P^c\cap \llb\tau\rrb}\cdot X=(\ind_{P^c}\ind_{\llb\tau\rrb})\cdot X=\ind_{\llb\tau\rrb}\cdot (\ind_{P^c} \cdot X)=\ind_{\llb\tau\rrb}\cdot Z .$$ 
		Lemma \ref{le: single jump} gives $\Delta Z_{\tau}\ind_{[\tau, \infty)}=0$, or equivalently $\Delta Z_{\tau}=0$, i.e.,  the set $J$ of  jumps of $Z$ satisfies $J \cap \llb\tau\rrb=\emptyset$. 
		Writing $J=\cup_n\llb \tau_n \rrb$ for some stopping times $(\tau_n)_{n\in\bN}$ we get $$\llb\tau_n\rrb \cap \llb\tau\rrb \subseteq J \cap \llb\tau\rrb=\emptyset \quad \text{ for all } n\in \bN,$$ 
		so every $\tau_n$ is totally-inaccessible, i.e., $J$ is thin totally-inaccessible.  The decomposition $X=Y+Z$ is unique, because given another one $X=\hat{Y}+\hat{Z}$ we get that the semimartingale $W:=Z-\hat{Z}=\hat{Y}-Y$ is quasi-left-continuous (and so $\ind_P \cdot W=0$), and satisfies $W=\ind_P \cdot W$, so $W=0$.
	\end{proof}
	
	\begin{remark}
		\label{rem: predictable support}
		The thin set $P\in\cP$ in Corollary \ref{BichtDecompThm} is  only unique up to $X$-null sets.
		However, if the semimartingale $X$ is predictable then $X=M+A$, with $M$ \textit{continuous} local martingale and $A$ predictable process of finite variation. So, the set of its jumps  is a  thin predictable set, which is obviously the smallest possible choice for the set $P$ (smallest with respect to the inclusion of sets; to be precise, we always consider equivalence classes of sets). 
		It turns out that even if $X$ is a local martingale, or more generally any semimartingale, there always exists the smallest choice $\bar{P}$ for a set $P$  as in  Corollary \ref{BichtDecompThm}, and this observation is (we believe) new. This choice involves the concept of predictable support, developed in \cite[Ch.\ 1, Sec.\ 2d]{JaSh02}, as we explain now. 
		
		Let $X$ be a general semimartingale, and consider its set of jumps $J:=\{\Delta X\neq 0 \}$. Write $J$ as  $J=\cup_n\llb \tau_n \rrb$ for some stopping times $(\tau_n)_{n\in\bN}$. By the well-known result of Corollary \ref{cor: stop time decomp}, each $\tau_n$ admits an a.s.~unique decomposition $\tau_n=\tau_n^a\wedge\tau_n^{ti}$ where $\tau_n^a$ is an accessible stopping time
		and $\tau_n^{ti}$ is a totally-inaccessible stopping time. 
		Therefore, $J=J^a\cup J^{ti}$, where $J^a:=\cup_n\llb\tau_n^a\rrb$ and $J^{ti}:=\cup_n\llb\tau_n^{ti}\rrb$ are disjoint (i.e., $\ind_{J^a \cap J^{ti}}=0$). The predictable support $J^p$ of the thin set $J$ is the smallest thin predictable set 
		that contains $J^a$, whose  existence is proved in \cite[Prop.\ 2.34]{JaSh02}. 
		Clearly $J^p$ is the $\leq$-smallest choice for the set $P$ in Corollary \ref{BichtDecompThm}, i.e., if $\bar{P}$ is another choice for $P$  then $J^p\subseteq \bar{P}$ a.s. for all $t$.
	\end{remark} 
	
	\begin{remark}
		\label{rem: decomp for marts and fin var}
		Applying Corollary \ref{BichtDecompThm} to a purely-discontinuous process of finite variation  $V=X$ we obtain two purely-discontinuous processes of finite variation $V^{jpr}:=Y,V^{jti}:=Z$, where $V^{jpr}$ has predictable jump times  and $V^{jti}$ has totally-inaccessible jump times, thus giving the decomposition \cite[Prop.\ 22.17]{Ka97}. Notice that, since a stopping time $\tau$ can be identified with the purely-discontinuous process of finite variation  $V=\ind_{(0,\tau]}$, which is predictable/accessible/totally inaccessible if and only if $\tau$ is such, this decomposition of $V$ subsumes the decomposition of $\tau$ that we obtained in Corollary \ref{cor: stop time decomp}. 
		
		Analogously, applying Corollary \ref{BichtDecompThm} to a purely-discontinuous local martingale $M=X$ gives two purely-discontinuous local martingales $M^{jpr}:=Y,M^{jti}:=Z$, where $M^{jpr}$ has predictable jump times and $M^{jti}$ has totally-inaccessible jump times, obtaining the decomposition in \cite[Prop.\ 23.16]{Ka97} (see also \cite[Th.\ 7.9]{Bi81}) as a corollary of Theorem \ref{BichtDecompThm}.
	\end{remark} 
	\section{The spectral measure}
	\label{se: the spectral measure}
	
	We now briefly introduce spectral measures; for more details we refer to \cite[Ch.\ 9, Sec.\ 1]{Con00}, or to \cite[Ch.\ 12, Sec.\ 17]{Ru91Fu} (which calls them \emph{resolution of the identity}). We then prove that they always admit an equivalent control measure, and thus Theorem \ref{DellacherieThmVectMeas} applies to them.
	
	Given a \emph{separable} Hilbert space $(\cH,(\cdot, \cdot)_\cH)$ on the field $\bC$ of complex numbers, let $\cB(\cH)$ be the set of bounded linear transformations from $\cH$ into $\cH$. Given a measurable space $(\Omega,\cF)$, a \emph{spectral measure} for $(\Omega,\cF,\cH)$  is a function $E: \cF \rightarrow \cB(\cH)$
	such that:
	\begin{enumerate}
		\item   for each $\Delta\in \cF, E(\Delta)$ is an orthogonal projection on a closed subspace of $\cH$;
		
		\item $E(\emptyset)$ equals the projection $0$ on the origin $\{0_\cH\}$, and $E(\Omega)$ equals the identity $1=Id_\cH$ on $\cH$;
		
		\item \label{commute} $E\left(\Delta_{1} \cap \Delta_{2}\right)=E\left(\Delta_{1}\right) E\left(\Delta_{2}\right)$ for each $\Delta_{1}$ and $\Delta_{2}$ in $\cF$;
		\item \label{spectral measure additive}
		For every $x \in H$ and $y \in H$, the set function $E_{x, y}:\cF \to \bC$ defined by
		$$
		E_{x, y}(\cdot):=(E(\cdot) x, y)_\cH
		$$
		is a complex measure on $\cF$.
	\end{enumerate} 
	Let us now point out the relationships between spectral measures and vector measures. 
	Item \ref{spectral measure additive} states that $E_{x,y}$ is a $\bC$-valued measure, i.e., if $\left\{\Delta_{n}\right\}_{n\in \bN}$ are pairwise disjoint sets of $\cF$ then
	\begin{align}
		\label{eq: series Exy}
		E_{x,y}\left(\bigcup_{n\in \bN} \Delta_{n}\right)=\sum_{n\in \bN} E_{x,y}\left(\Delta_{n}\right).
	\end{align}
	It follows\footnote{Because  $E_{x,y}$ is a complex measure, and item \ref{commute} implies that the projections $E\left(\Delta_{1}\right)$ and $E\left(\Delta_{2}\right)$ commute (which is equivalent to saying that they are orthogonal, i.e., their ranges are orthogonal subspaces).} 
	that, for each $x\in \cH$, $\Delta \mapsto E(\Delta) x$ is a (countably additive) $\cH$-valued (vector) measure on $(\Omega,\cF)$.
	Notice that $E$ is \emph{not} a vector measure with values in the Banach space $\cB(\cH)$, since given pairwise disjoint sets $\left\{\Delta_{n}\right\}_{n\in \bN}$ of $\cF$, the series
	\begin{align}
		\label{eq: series E}
		\textstyle
		\sum_{n\in \bN} E\left(\Delta_{n}\right)
	\end{align}
	does \emph{not}\footnote{It only converges if all but finitely many of the $E\left(\Delta_{n}\right)$ are 0 (since the norm of any projection is either 0 or 1, the partial sums of the series  in \eqref{eq: series E} cannot form a Cauchy sequence).} generally converge \emph{in the norm topology} of $\cH$. On the other hand,  \eqref{eq: series Exy} states that the series in \eqref{eq: series E} converges in the  Strong Operator Topology\footnote{The SOT is the topology on $\cB(\cH)$ generated by the family of seminorms $\{p_x\}_{x\in \cH}$, where $p_x(T):=||Tx||_{\cH}$ for $T\in \cB(\cH)$.} (SOT); thus $E$ is a (special type of) $\cB(\cH)$-valued measure, if $\cB(\cH)$ is endowed with the SOT (as such, it is a locally-convex TVS).
	
	The definitions of null set, concentration, mutual singularity, and control measure, for a spectral measure $E$ on on $(\Omega,\cF,\cH)$  are the same as for a vector measure; in particular, $F\in \cF$ is a \emph{$E$-null set} if $E(G)=0$ for all $G \subseteq F, G\in \cF$, and a finite positive measure $\mu$ is a \emph{control} for $E$ if every $\mu$ -null set is a $E$-null set.

	\begin{lemma}
		\label{le: spectral control}
		Any spectral measure $E$ admits an equivalent control measure, and $\Delta\in \cF$ is a $E$-null set if and only if $E(\Delta)=0$.
	\end{lemma}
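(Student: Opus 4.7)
The plan is to handle both claims together by constructing a positive finite measure $\mu$ that records, for a rich enough family of test vectors, the size of the projection $E(\Delta)$. The essential ingredient is the separability of $\cH$.

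First, I would observe that the non-trivial direction of the second claim is immediate from item \ref{commute}: if $E(\Delta)=0$, then for any $G\subseteq \Delta$ in $\cF$ we have $E(G)=E(G\cap\Delta)=E(G)E(\Delta)=0$, so $\Delta$ is $E$-null; the converse is tautological. Thus, throughout the rest, I can equate ``$\Delta$ is $E$-null'' with ``$E(\Delta)=0$''.

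Next, fix a countable dense sequence $(x_n)_{n\in\bN}$ in $\cH$ (using separability) and, for each $n$, define
\[
\mu_n(\Delta):=(E(\Delta)x_n,x_n)_\cH=\|E(\Delta)x_n\|_\cH^2,
\]
where the second equality uses that $E(\Delta)$ is a self-adjoint idempotent. By item \ref{spectral measure additive} each $\mu_n$ is a complex measure, and the expression on the right shows it is positive and finite, with $\mu_n(\Omega)=\|x_n\|_\cH^2$ (alternatively, countable additivity of $\mu_n$ follows from Pythagoras, since pairwise disjoint $\Delta_k$ give pairwise orthogonal projections $E(\Delta_k)$ by item \ref{commute}). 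Then I would set
\[
\mu:=\sum_{n=1}^\infty 2^{-n}\,\frac{\mu_n}{1+\|x_n\|_\cH^2},
\]
which is a finite positive measure on $(\Omega,\cF)$.

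Finally, I would verify that $\mu$ is equivalent to $E$. If $\mu(\Delta)=0$, then $\mu_n(\Delta)=\|E(\Delta)x_n\|_\cH^2=0$ for every $n$, so $E(\Delta)$ vanishes on the dense set $(x_n)_n$; by boundedness of $E(\Delta)\in\cB(\cH)$ this forces $E(\Delta)=0$, hence $\Delta$ is $E$-null by the first paragraph. Conversely, if $\Delta$ is $E$-null then $E(\Delta)=0$, so every $\mu_n(\Delta)=0$, and therefore $\mu(\Delta)=0$. This proves both statements simultaneously. The only subtle step is the construction itself; once one realizes that separability of $\cH$ lets us test $E(\Delta)$ against a countable dense family, the rest is routine.
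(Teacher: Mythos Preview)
Your proof is correct and follows essentially the same approach as the paper: construct the control measure as a weighted series $\sum_n c_n E_{x_n,x_n}$ over a countable dense sequence $(x_n)$, using the identity $E_{x,x}(\Delta)=\|E(\Delta)x\|_\cH^2$. The only minor difference is that, to show $E(\Delta)=0$ implies $\Delta$ is $E$-null, you invoke the multiplicativity $E(G)=E(G)E(\Delta)$ from item~\ref{commute}, whereas the paper instead uses positivity of the measure $E_{x,x}$ (so that $E_{x,x}(\Delta)=0$ forces $E_{x,x}(G)=0$ for $G\subseteq\Delta$); both arguments are equally short and valid.
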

	\begin{proof}
		If $E$ is a spectral measure for $(\Omega,\cF,\cH)$, and $\Delta \in \cF$,  each $E(\Delta)$ is an orthogonal projection, so $x-E(\Delta)x$ is orthogonal to $E(\Delta)x$ for each $x\in \cH$, leading to the identity 
		\begin{align}
			\label{eq: spectral positive}
			||E(\Delta)x||^2_\cH=E_{x,x}(\Delta) , \quad x\in \cH .
		\end{align}
		Equation  \eqref{eq: spectral positive} shows that the complex measure $E_{x,y}$ is  a finite positive measure when $y=x$, and that $\Delta$ is a $E$-null set if and only if $E(\Delta)=0$, i.e., if and only if $E_{x,x}(\Delta)=0$ for all $x\in \cH$. 
		Let $D:=\{x_n\}_{n\in \bN}$ be a dense subset of $\cH$. Then, 
		since $(x,y)\mapsto E_{x,y}(\Delta)$ is continuous, $E_{x,x}(\Delta)$ equals 0 for all $x\in \cH$ if and only if $ E_{x_n,x_n}(\Delta)=0$ for all $n\in \bN$.
		Since $E_{x,y}$ has total variation at most $\|x\|_{\cH}\|y\|_{\cH}$ (see \cite[Ch.\ 9, Sec.\ 1, Lem.\ 1.9]{Con00}), the formula
		$$  \mu:=  \sum_{n\in \bN} t_n E_{x_n,x_n} , 
		\quad \text{ where } t_n:= \frac{2^{-(n+1)}}{1+\|x_n\|_{\cH}^2}>0 ,$$
		defines a positive \emph{finite} measure (of mass at most 1). Since $\mu(\Delta)=0$ if and only if $ E_{x_n,x_n}(\Delta)=0$ for all $n\in \bN$, $\mu$ is an equivalent control measure for $E$.
	\end{proof} 
	
	\begin{remark}
		\label{rem: DellacherieThmSpectralMeas}
		It follows from Lemma \ref{le: spectral control}  that, if $E$ is a spectral measure for $(\Omega,\cF,\cH)$, then Theorem \ref{DellacherieThmVectMeas} applies to $\Theta:=E$; of course the resulting $\Theta(G_\Theta\cap \cdot )$ and $\Theta(G_\Theta^c\cap \cdot )$ are \emph{not} spectral measures, since they do not satisfy the property that, once applied to  $\Omega$, they equal the identity (i.e., the projection on the whole of $\cH$). They do however satisfy all the other properties which define a spectral measure.
	\end{remark}
	
	\begin{remark}
		\label{rem: spectral theorem}
		The crucial result about spectral measures, which clarifies their importance, is the spectral theorem, which `can be used to answer essentially every question about normal operators' \cite[Page 255]{Con00}. 
		
		If $N$ is a normal operator on a separable Hilbert space $\cH$, and $E^N$ is the  associated spectral measure given by the spectral theorem, then the fact that $E^N$ admits a control measure $\mu$ is stated and proved in \cite[Ch.\ 9, Prop.\ 8.3]{Con00}. We chose to present Lemma \ref{le: spectral control} anyway, since  it is much more elementary.
	\end{remark}

	\section{Dependence of    $\mu_\cG$ and $\cG_\mu$  on  $\mu$ and $\cG$}
	\label{se: dependence of decomposition}
	In this section we study  the dependence on   $\cG\subseteq \cF$ (a non-empty family closed under countable unions) and $\mu$ (a real measure) of $\cG_\mu$ (the $\cG$-atomic support of $\mu$),    $\mu_\cG$ and $\mu_\cG^\perp$ (the $\cG$-atomic  and $\cG$-diffuse parts of $\mu$).

	Let $\cM$ be the space of signed measures (i.e., real-valued measures) on $(\Omega,\cF)$ and $\cM_+\subseteq\cM$ be the convex cone of positive finite measures. Recall that the existence of $\cG_\mu$, $\mu_\cG$ and $\mu_\cG^\perp$ for any $\mu\in\cM$ is guaranteed by Theorem \ref{DellacherieThmVectMeas} and the fact that the variation of $\mu$ is a control measure for $\mu$ (see Remark \ref{rmk:ContrMeas}).
	
	We endow $\cM$ with the set-wise (partial) order $\geq$  and with the total variation norm $\|\cdot\|_\cM$ defined, for $\mu,\nu\in\cM$, as $\mu\geq\nu$ if $\mu(A)\geq \nu(A)$ for every $A\in\cF$ (i.e., we take $\cM_+$ as the positive cone) and $\|\mu\|_\cM:=|\mu|(\Omega)$, where $|\mu|\in \cM_+$ is the variation of $\mu\in \cM$. We recall  that $(\cM,\geq, \| \cdot \|_\cM)$ is an AL-space (see \cite[Th.\ 9.51 and 10.56]{AlBo99}), and it is order complete (\cite[Th.\ 10.53, 10.56, Lem.\ 8.14]{AlBo99}); in particular  $|\mu|=\mu \vee (-\mu)$, and there exists the supremum $\vee_{i\in I} \mu_i$ of any net $\{\mu_i \}_{i\in I}\subseteq\cM$ bounded from above; moreover, \cite[Th.\ 10.53, item (2)]{AlBo99} shows that \emph{if   $\{\mu_i \}_{i\in I}$ is increasing} then 
	\begin{equation}\label{IncreasingNet}
		(\vee_{i\in I}\mu_i)(A)=\sup_{i\in I}(\mu_i(A)), \quad A\in\cF ;
	\end{equation}
	obviously \eqref{IncreasingNet} fails without the monotonicity assumption, since for general measures $\nu_1,\nu_2$ the supremum $\nu_1 \vee \nu_2:=\sup\{\nu_1,\nu_2\}$ satisfies
	$$(\nu_1 \vee \nu_2)(F)=
	\sup\{\nu_1(B)+\nu_2(F\setminus B): \: B\in\cF, \: B\subseteq F \},$$
	and so does not normally equal $\sup\{\nu_1(F),\nu_2(F)\}$.
	Define the set of $\cG$-atomic measures to be 
	\begin{align}
		\label{eq: atomic measures}
		\cA(\cG):=\{\mu \in \cM:  \text{there exists } G\in \cG \text{ such that $G^c$ is $\mu$-null}\},
	\end{align}
	so that $\mu \in \cM$ belongs to  $\cA(\cG)$ if and only if $\mu$ is concentrated on some $G\in \cG$. 
	Then, define  the set of $\cG$-diffuse measures to be
	\begin{align}
		\label{eq: diffuse measures}
		\cD(\cG):=\{\mu \in \cM: \text{ every } G\in \cG \text{ is   $\mu$-null}\},
	\end{align}
	and recall that  $F\in \cF$ is  $\mu$-null iff $|\mu|(F)=0$. 
	
	We now recall some facts about projections, in three different contexts. A linear operator $P$ on a vector space $B$ is called a projection if $P^2=P$, i.e., if $Pb=b$ for every $b$ in the range of $P$. Two subspaces $V,W$ of  $B$ are said to be complemented if $V+W=B$ and $V \cap W=\{0\}$, in which case we write $B=V \oplus W$, and each $b\in B$ decomposes as $v+w$ for a unique $(v,w)\in V\times W$, which defines the projections $b \mapsto v$ and $b \mapsto w$ of $B$ on $V$ and $W$.
	We say that $x$ in a Banach space $(B,\| \cdot \|_B)$    is  orthogonal to $y\in B$ (in the sense of James, see \cite{jam45,jam47}), and write $x \perp y$, if $\|x\|_B=\inf \{\|x+t y\|_B: t \in \bR\} ;$ if $C,D \subseteq B$, then we say that they are orthogonal, and write $C \perp D$, if $c\perp d$ for each $c\in C, d \in D .$
	Given a subset $C$ of a metric space $(B,d_B)$, if for every $b \in B$ there exists a unique  minimizer $P_C(b)$ of the distance $C \ni c \mapsto d_B(b,c)$ from $C$, then $P_C: B \to C$ is called the \emph{unique nearest-point map} of $C$ (often called the \emph{metric projection} on $C$), and $C$ is said to be a Chebyshev set.
	
	We also introduce some concepts for Riesz spaces, for which we refer to \cite{AlBo99, aliprantis2006positive} for further details.
	Given a partially ordered set $(E,\geqslant)$ and $A\subseteq E$, we say that $s\in E$ is the \emph{supremum} of $A$ (written $s=\sup A$) if $s\geq a$ for all $a\in A$, and  $m\geq a$ for all $a\in A$ imply $m\geq s$, and analogously define the \textit{infimum}; we say that $E$ is a \textit{lattice} if each pair of elements $x,y\in E$ has a supremum $x\vee y:=\sup\{x,y\}$ and an infimum $x\wedge y:=\inf\{x,y\}$. A \textit{Riesz space} is an ordered vector space that is also a lattice, and its positive cone is $E_+:=\{x\in E: x\geq 0\}$. In what follows (before Proposition \ref{th: dependence atomic part}), $(E,\geqslant)$  will always denote a Riesz space and $A\subseteq E$ a subset of $E$.
	For a vector $x\in E$, its \textit{absolute value} $|x|$, \emph{positive part} $x^+$ and \emph{negative part} $x^-$ are defined by $$|x|:=x\vee(-x), \quad x^+:=x\vee 0, \quad x^-:=(-x)\vee 0,$$
	and satisfy $x=x^+ - x^-, |x|=x^+ + x^-$.  A set $A\subseteq E$ is called \textit{solid} if $|x|\geqslant|y|$ and $x\in A, y\in E$ imply $y\in A$. A solid vector subspace of $E$ is called an \textit{ideal} of $E$.
	
	We say that $A$ is \textit{order bounded from above} (respectively, \textit{from below}) if there exists $x\in E$ such that $x\geqslant a$ (respectively, $a\geqslant x$) for every $a\in A$, and \emph{order bounded} if it is order bounded both from above and below. A Riesz space is \textit{order complete} (or \emph{Dedekind complete}) if every non-empty subset $A\subseteq E$ that is order bounded from above has a supremum $\sup A$ (equivalently, if every non-empty subset that is order bounded from below has an infimum).
	
	For a net $(x_\alpha)_\alpha \subseteq E$, the notation $x_\alpha \uparrow x$ means that $(x_\alpha)_\alpha$ is increasing and that its supremum in $E$ is $x$;  we define $x_\alpha \downarrow x$  analogously. A net $(x_\alpha)_\alpha$ in $E$ \textit{converges in order} to some $x\in E$, denoted by $x_\alpha\xrightarrow{o} x$, if there is a net $(y_\alpha)_\alpha$ satisfying $y_\alpha\downarrow 0$ and $|x_\alpha-x|\leqslant y_\alpha$ for each $\alpha$. A set $A\subseteq E$  is \textit{order closed} if $(x_\alpha)_\alpha \subseteq A$ and $x_\alpha\xrightarrow{o} x$ imply $x\in A$. An order closed ideal of $E$ is called a \textit{band} of $E$. 
	If $A\subseteq E$,  its \textit{disjoint complement}  $A^d$ is defined by
	\begin{equation}\label{eq:DisjCompl}
		A^d:=\{x\in E: |x|\wedge |y|=0, \: \text{ for all } \: y\in A \}
	\end{equation}
	and it is a band. A band $A$ in a Riesz space $E$ is a \textit{projection band} if $E=A\oplus A^d$; the corresponding projections on $A$ and $A^d$ are called order projections, or band projections. 
	Every band  in an order complete Riesz space  is a projection band \cite[Th.\ 8.20]{AlBo99}.
	
	\begin{proposition}
		\label{th: dependence atomic part}
		If $\cG\subseteq\cF, \cG \neq \emptyset$ is closed under countable unions, then $\cA(\cG), \cD(\cG)\subseteq \cM$ are closed complemented and orthogonal subspaces, Chebyshev sets, and projection bands, each one the disjoint complement of the other.
		
		The  maps $A:\mu\mapsto\mu_\cG$ and $D:\mu\mapsto\mu_\cG^\perp$ are the  band projections of $\cM$ on $\cA(\cG)$ and $\cD(\cG)$, and they are positive linear contractions, unique nearest-point maps, and order-continuous homomorphisms of Riesz spaces, and satisfy $|A\mu|\leq |\mu|$, $|D\mu|\leq |\mu|$ for every $\mu\in \cM$.
	\end{proposition}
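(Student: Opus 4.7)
The plan is to first extend the decomposition of Theorem~\ref{thm: Dellacherie Thm Positive Meas} from positive measures to signed ones, then identify $\cA(\cG)$ and $\cD(\cG)$ as complementary projection bands in the AL-space $\cM$, so that all the claimed properties of $A$ and $D$ follow from standard facts about band projections and AL-spaces. Concretely, for $\mu\in\cM$ I would apply Theorem~\ref{thm: Dellacherie Thm Positive Meas} to the finite positive measure $|\mu|$ to obtain a set $G_\mu\in\cG$ such that $|\mu|(G\setminus G_\mu)=0$ for every $G\in\cG$, and define $A\mu:=\mu(\cdot\cap G_\mu)$ and $D\mu:=\mu(\cdot\cap G_\mu^c)$. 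Then $A\mu\in\cA(\cG)$ trivially, while $|D\mu|(G)\leq |\mu|(G\cap G_\mu^c)=0$ for every $G\in\cG$ shows $D\mu\in\cD(\cG)$.

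I would then check that $\cA(\cG)$ and $\cD(\cG)$ are linear subspaces (using closure of $\cG$ under finite unions for $\cA(\cG)$, and the inequality $|\mu_1+\mu_2|\leq|\mu_1|+|\mu_2|$ for $\cD(\cG)$), and that $\cA(\cG)\cap\cD(\cG)=\{0\}$ (if $|\mu|$ is concentrated on $G\in\cG$ and $G$ is $|\mu|$-null, then $|\mu|=0$). Together with the existence of the decomposition just produced, this yields the algebraic direct sum $\cM=\cA(\cG)\oplus\cD(\cG)$; the linearity of $A$ and $D$ then follows from uniqueness, and positivity ($\mu\geq 0\Rightarrow A\mu,D\mu\geq 0$) is immediate from the definition.

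Next I would verify that $\cA(\cG),\cD(\cG)$ are bands and are each other's disjoint complements. Solidity of both is immediate from $|\nu|\leq|\mu|$. For $\cD(\cG)=\cA(\cG)^d$: any $\mu\in\cA(\cG)$ and $\nu\in\cD(\cG)$ are concentrated on disjoint sets $G,G^c$, hence $|\mu|\wedge|\nu|=0$, giving one inclusion; conversely, if $\nu\in\cA(\cG)^d$ and $G\in\cG$, then $\eta:=|\nu|(G\cap\cdot)$ lies in $\cA(\cG)$ and satisfies $\eta\leq|\nu|$, so $\eta=\eta\wedge|\nu|=0$, i.e.\ $|\nu|(G)=0$. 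The identity $\cA(\cG)=\cD(\cG)^d$ follows from the direct sum: any $\nu\in\cD(\cG)^d$ decomposes as $\nu=A\nu+D\nu$ with $D\nu\in\cD(\cG)$ and $|D\nu|\leq|\nu|$, forcing $D\nu=0$. Hence both are projection bands and $A,D$ are the corresponding band projections, which are automatically positive, order-continuous Riesz homomorphisms satisfying $|A\mu|\leq|\mu|$ and $|D\mu|\leq|\mu|$.

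For the remaining Banach-space statements I would invoke the defining AL-space identity $\|\alpha+\beta\|_\cM=\|\alpha\|_\cM+\|\beta\|_\cM$ whenever $|\alpha|\wedge|\beta|=0$. From $|A\mu|\leq|\mu|$ we obtain that $A$ (and $D$) is a norm contraction, hence continuous, so $\cA(\cG)=\ker D$ and $\cD(\cG)=\ker A$ are norm-closed and topologically complementary. For the Chebyshev and unique-nearest-point property, given $b\in\cM$ and $v\in\cA(\cG)$ I would write $b-v=(b-Ab)+(Ab-v)$, with $b-Ab\in\cD(\cG)$ and $Ab-v\in\cA(\cG)$ mutually disjoint, so that $\|b-v\|_\cM=\|b-Ab\|_\cM+\|Ab-v\|_\cM\geq\|b-Ab\|_\cM$ with equality iff $v=Ab$; the analogous argument works for $\cD(\cG)$. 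Finally, the same decomposition with $v$ replaced by $-tw$ for $w\in\cD(\cG),t\in\bR$ yields $\|x+tw\|_\cM=\|x\|_\cM+|t|\|w\|_\cM\geq\|x\|_\cM$ for every $x\in\cA(\cG)$, giving the James orthogonality $\cA(\cG)\perp\cD(\cG)$. The main obstacle I expect is the clean identification of the two sets as mutual disjoint complements; once this is established, the remaining conclusions are standard applications of Riesz-space and AL-space machinery.
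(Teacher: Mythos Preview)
Your argument is correct and in most parts parallels the paper's proof, but the route you take to show that $\cA(\cG)$ is a band (equivalently, that $\cA(\cG)=\cD(\cG)^d$) is genuinely different and cleaner. The paper establishes this by verifying directly that $\cA(\cG)$ is order closed: it takes an increasing net $0\leq\mu^i\uparrow\mu$ with $\mu^i\in\cA(\cG)$ and proves $\vee_i\mu^i_\cG=\mu_\cG$ via an explicit computation with suprema of measures and the formula \eqref{IncreasingNet}; only then does it invoke $(B^d)^d=B$ in an order-complete Riesz space. You instead exploit the direct-sum decomposition already in hand: for $\nu\in\cD(\cG)^d$ you write $\nu=A\nu+D\nu$, observe $|D\nu|\leq|\nu|$, and use solidity of the band $\cD(\cG)^d$ to force $D\nu\in\cD(\cG)\cap\cD(\cG)^d=\{0\}$. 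This bypasses the net argument entirely and is more economical.

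One small presentational point: you invoke $|D\nu|\leq|\nu|$ in that step before you list it among the consequences of $D$ being a band projection, so the logic looks circular at first glance. It is not, because the inequality follows immediately from the explicit formula $D\nu=\nu(\cdot\cap G_\nu^c)$ (the variation of a restriction is the restriction of the variation); just make that explicit when you write it up. The remaining Banach-space claims (closedness via kernels of contractions, the Chebyshev and James-orthogonality computations using the AL identity) are handled the same way in both proofs.
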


	Throughout our proofs, we  will sometimes denote with $G_\alpha\in \cG_\alpha$ a representative of the equivalence class $\cG_\alpha$ (for any $\alpha\in \cM$): while it is normally best to ignore the difference between the two (e.g.~to write $\cG_\mu \subseteq \cH_\mu$ $\mu$-a.e. to mean $G_\mu \subseteq H_\mu$ $\mu$-a.e., or equivalently $\cG_\mu \leq_\mu \cH_\mu$, i.e., $\cG_\mu \subseteq \cH_\mu$), we will occasionally be picky about this difference, when it makes things clearer. 
	
	\begin{proof}[Proof of Proposition \ref{th: dependence atomic part}.]
		Obviously two measures $\mu,\nu\in \cM$ are singular if and only if there exists $F\in \cF$ such that $|\mu|(F^c)=0=|\nu|(F)$, and this is equivalent to $|\mu| \wedge |\nu| =0$. 
		Thus $\nu \in \cD(\cG)$ if and only if $\nu$ is singular with every measure concentrated on a set of $\cG$, i.e., $\cD(\cG)=\cA(\cG)^d$; in particular $\cD(\cG)$ is a band.
		
		That $\cA(\cG)$ is a vector subspace follows simply from the fact that, if $\mu,\nu$ are concentrated on $G_\mu,G_\nu \in \cG$, then $\mu+\nu$ is concentrated on $G_\mu\cup G_\nu\in  \cG$, and if $t\in \bR$ then $ t\mu$ is concentrated on $G_\mu$. 
		Analogously, $\cD(\cG)$ is a vector subspace, since $|\mu|(G)=|\nu|(G)$ and $t\in \bR $ imply $|\mu+ \nu|(G)=0=|t \mu|(G)$. Since $\cA(\cG)$ and $\cD(\cG)$  are vector subspaces, $A,D$ are linear.
		If $\mu_n \to \mu$ and $\mu_n$ is concentrated on $G_n\in \cG$ then $\mu$ is concentrated on $G:=\cup_n G_n\in \cG$, and so $\cA(\cG)$ is (topologically) closed. If $\mu_n \to \mu$ then  $|\mu_n|(G)=0$  for every $n$ implies $|\mu|(G)=0$, and so $\cD(\cG)$ is (topologically) closed. 
		That $\cA(\cG)$ and $\cD(\cG)$ are complemented follows from Theorem \ref{thm: Dellacherie Thm Positive Meas}, since the existence of the decomposition for every $\mu\in \cM$ shows that $\cM=\cA(\cG)+\cD(\cG)$, and its uniqueness shows that $\cA(\cG) \cap \cD(\cG)=\{0\}$.
		Since $\cA(\cG)$, $\cD(\cG)$  are \emph{closed} complemented subspaces of $\cM$, then $A,D$ are continuous  (see \cite[Th.\ 6.47 ]{AlBo99}). 
		
		Since $\mu\geq 0$ implies $\mu_\cG, \mu_\cG^\perp \geq 0$,  then $ A,D$ are positive; since they are linear, they are increasing.
		To prove that $\cD(\cG)^d=\cA(\cG)$, recall that in every order complete Riesz space every band $B$ satisfies $(B^{d})^d=B$ (see \cite[Lem.\ 8.4, Th.\ 8.19]{AlBo99}); since by definition $\cD(\cG)=\cA(\cG)^d$, to conclude that $\cD(\cG)^d=\cA(\cG)$ we only need to show that $\cA(\cG)$ is a band (and thus $\cD(\cG), \cA(\cG)$ are projection bands). Since it is clearly an ideal of $\cM$, we only need to show that  $0 \leq \mu^{i} \uparrow \mu$ and $\{\mu^{i}\}_{i\in I} \subseteq \cA(\cG)$ imply $\mu \in \cA(\cG)$,  as this implies that $\cA(\cG)$  is order closed (see \cite[First 6 lines of Sec.\ 8.9]{AlBo99}) and thus a band. 
		Since $\nu\in \cM$ is in $\cA(\cG)$ if and only if $\nu=\nu_\cG$, and $A,D$ are increasing,  we can equivalently show that $0 \leq \mu^{i} \uparrow \mu$  implies $\vee_{i}\mu^{i}_\cG  =\mu_\cG$ (and  so $0 \leq \mu^{i}_\cG \uparrow \mu_\cG$). 
		So, assume $0 \leq \mu^{i} \uparrow \mu$; as  $\mu\geq \mu^i$ implies  $\mu_\cG\geq\mu^i_\cG$, then $\{\mu^i_\cG\}_{i\in I}$ is bounded from above, so it admits a supremum, which by definition satisfies 
		\begin{equation}
			\mu_\cG\geq\vee_{i\in I} \mu^i_\cG .
		\end{equation}
		To prove the opposite inequality, since $G_{\mu^i}$ is the $\leq_{\mu^i}$-maximum of $\cG$  and $G_\mu\in\cG$, we have that
		\begin{equation}
			\mu^i_\cG(F)=\mu^i(F\cap G_{\mu^i})\geq\mu^i(F\cap G_\mu), \quad \text{ for all } \: F\in\cF, 
		\end{equation}
		so that 
		\begin{equation}
			(\vee_{j\in I}\mu^j_\cG)(F)\geq \mu^i_\cG(F)\geq \mu^i(F\cap G_\mu), \quad \text{ for all } \: F\in\cF ,
		\end{equation}
		and taking the supremum over $i\in I$ and using \eqref{IncreasingNet} we conclude that, for every $F\in \cF$, 
		\begin{equation}
			(\vee_{i\in I}\mu^i_\cG)(F)\geq \sup_{i\in I} (\mu^i(F\cap G_\mu))=(\vee_{i\in I}\mu^i)(F\cap G_\mu)=\mu(F\cap G_\mu)=\mu_\cG(F),
		\end{equation}
		proving that indeed $\vee_{i\in I} \mu^i_\cG= \mu_\cG$, and so $\mu^i_\cG \uparrow \mu_\cG$.
		
		If $P$ is any projection on a projection band of a Riesz space $R$, then $P$ is an  order-continuous homomorphism of Riesz spaces, and satisfies $|Pr|\leq |r|$ for every $r\in R$, and so such are $A,D$; the inequalities  $|A\mu|\leq |\mu|$, $|D\mu|\leq |\mu|$ trivially hold anyway, and   they imply that $A,D$ are contractions.
		Since $|\mu| \wedge |\nu|=0 $  implies $|\mu + \nu|=|\mu |+ |\nu|$ (see \cite[Th.\ 8.12]{AlBo99}), and the norm on $\cM$ is given by $\|\mu\|_\cM=|\mu|(\Omega)$, we get that  $\|\mu + \nu\|_\cM=\|\mu \|_\cM+ \|\nu\|_\cM$ for every $\mu \in \cA(\cG), \nu \in \cD(\cG)$, and so
		$A,D$ are the nearest point projections on  $\cA(\cG), \cD(\cG)$, and these are Chebyshev sets which are orthogonal. 
	\end{proof}

	We now  study the order properties of  $\cG\mapsto\mu_\cG$ and $\cG\mapsto \cG_\mu$. 
	Let $\bS:=\{\cG: \cG\subseteq\cF,\cG\neq\emptyset \}$ be the set of all non-empty subfamilies of $\cF$, and define its subset
	\begin{equation}
		\bG:=\{\cG\in\bS: (G_n)_{n\in\bN}\subseteq\cG \: \Rightarrow \: \cup_{n\in\bN} G_n\in\cG \}
	\end{equation}
	of all subfamilies of $\cF$ which are closed under countable unions. We endow $\bS$ and $\bG$ with the order of inclusion of sets.
	Let $\cG\in\bS$. Since the intersection of families closed under countable unions is closed under countable unions, the intersection of every $\cH\in\bG$ such that $\cG\subseteq\cH$ is the smallest element of $\bG$ that contains $\cG$, which we denote by $\cG^\sigma$. Notice that $\cG^\sigma$ can also be built `from the inside',  as 
	\begin{align}
		\label{eq: built from inside}
		\cG^\sigma=\{\cup_{n\in \bN} G_n:  G_n\in\cG, n\in \bN \},
	\end{align}
	and 
	\begin{equation}
		\bG=\{\cG\in\bS: \cG=\cG^\sigma \}.
	\end{equation}
	For an arbitrary family $\{\cG^i \}_{i\in I}\subseteq\bG$, its infimum $\wedge_{i\in I} \cG^i$ and supremum $\vee_{i\in I}\cG^i$ in $\bG$ are given by
	\begin{align}
		\label{eq: sup is sigma union}
		\wedge_{i\in I}\cG^i &=\cap_{i\in I}\cG^i\\
		\vee_{i\in I}\cG^i&=(\cup_{i\in I}\cG^i)^\sigma.
	\end{align}
	Since \eqref{eq: built from inside} expresses $\cG^\sigma$ as the family of countable unions of sets in $\cG$,  we get that
	\begin{equation}\label{UnionSigma}
		(\cup_{i\in I}\cG^i)^\sigma=\{\cup_{i\in J} G_i:  J\subseteq I \text{ is countable, } J\neq\emptyset \text{ and } G_i\in\cG^i \},
	\end{equation}
	and so
	\begin{equation}\label{VeeGiSigma}
		\vee_{i\in I}\cG^i=\{\cup_{i\in J} G_i:  J\subseteq I \text{ is countable, } J\neq\emptyset \text{ and } G_i\in\cG^i \}.
	\end{equation}
	For instance, if $\cG,\cH\in\bG$ then
	\begin{equation}
		\cG\vee\cH=\{G,H,G\cup H: G\in\cG, H\in\cH \};
	\end{equation}
	and in particular if $\emptyset \in \cG \cap \cH$ then 
	$\cG\vee\cH=\{G\cup H: G\in\cG, H\in\cH \}$.

	\begin{proposition}\label{Prop:GmapGmu} If $\mu \in \cM$, 
		the map $\cG\mapsto\cG_\mu$ is increasing, i.e., if $\cG\subseteq\cH$ then $\cG_\mu\subseteq\cH_\mu$, $\mu$-a.e.~and it satisfies the following property: if $\{\cG^i \}_{i\in I}\subseteq\bG$, then there exists $J\subseteq I$ countable such that
		\begin{equation}\label{VeeGmu}
			(\vee_{i\in I} \cG^i)_\mu=(\vee_{i\in J} \cG^i)_\mu.
		\end{equation}
	Moreover, $(\vee_{i\in J} \cG^i)_\mu=\cup_{i\in J}\cG^i_\mu$ holds for any  countable $J\subseteq I$.
	\end{proposition}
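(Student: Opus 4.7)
The plan is to prove monotonicity first, then reduce the supremum statement to a countable subfamily via the $\mu$-essential supremum structure described in Remark \ref{rem: ae order}. Throughout, I identify $\mu$-null sets with $|\mu|$-null sets, so that Theorem \ref{thm: Dellacherie Thm Positive Meas} applies to the finite positive measure $|\mu|$ and gives meaning to $\cG_\mu$ for $\mu\in\cM$.

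For monotonicity, suppose $\cG \subseteq \cH$ and pick representatives $G_\mu, H_\mu$ of the two atomic supports. Then $G_\mu \in \cG \subseteq \cH$, so the maximality clause in item \ref{Maximum set Dellacherie Decomp Equivalence} of Theorem \ref{thm: Dellacherie Thm Positive Meas}, applied to $\cH$ (with the measure $|\mu|$), forces $G_\mu \setminus H_\mu$ to be $\mu$-null, i.e. $\cG_\mu \subseteq \cH_\mu$ $\mu$-a.e.

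For the main identity \eqref{VeeGmu}, I would fix a representative $G^*$ of $(\vee_{i\in I}\cG^i)_\mu$ and invoke Remark \ref{rem: ae order} to obtain sets $A_n \in \vee_{i\in I}\cG^i$, $n\in\bN$, with $G^* = \cup_n A_n$ $\mu$-a.e. By the explicit description \eqref{VeeGiSigma}, each $A_n$ has the form $A_n = \cup_{i \in J_n} G^{n,i}$ for some countable $J_n \subseteq I$ and sets $G^{n,i} \in \cG^i$. Setting $J := \cup_n J_n$, which is countable as a countable union of countable sets, one direction of \eqref{VeeGmu} is immediate from $G^{n,i} \in \cG^i$, which gives $G^{n,i} \subseteq \cG^i_\mu$ $\mu$-a.e., whence
$$G^* = \cup_n \cup_{i\in J_n} G^{n,i} \subseteq \cup_{i\in J} \cG^i_\mu \quad \mu\text{-a.e.}$$
For the opposite direction, one fixes representatives $G^i_\mu$ of $\cG^i_\mu$ for $i\in J$ and observes that $\cup_{i\in J} G^i_\mu$ is a countable union of sets each lying in some $\cG^i \subseteq \vee_{j\in J}\cG^j$; since $\vee_{j\in J}\cG^j \in \bG$ is closed under countable unions, this union belongs to $\vee_{j\in J}\cG^j$. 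Combined with the monotonicity already proved (applied to $\vee_{j\in J}\cG^j \subseteq \vee_{i\in I}\cG^i$), this yields
$$\cup_{i\in J} \cG^i_\mu \leq_\mu (\vee_{j\in J}\cG^j)_\mu \leq_\mu (\vee_{i\in I}\cG^i)_\mu = G^* \quad \mu\text{-a.e.}$$
Chaining the two displays gives both equalities in \eqref{VeeGmu}.

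The main obstacle is essentially bookkeeping: carefully tracking equivalence classes versus specific representatives and ensuring that the passage from the possibly uncountable index $I$ to the countable $J$ preserves the relevant inclusions. No deep step is required; the result is really a marriage between the countable description of $\vee_{i\in I}\cG^i$ in \eqref{VeeGiSigma} and the sequential realization of the $\mu$-essential supremum from Remark \ref{rem: ae order}.
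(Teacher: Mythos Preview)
Your proof is correct and follows essentially the same approach as the paper. The only minor difference is in how the countable set $J$ is extracted: you invoke Remark \ref{rem: ae order} to write $G^*$ as a $\mu$-a.e.\ countable union $\cup_n A_n$ of elements of $\vee_{i\in I}\cG^i$, whereas the paper simply observes that a representative of $(\vee_{i\in I}\cG^i)_\mu$ already \emph{is} an element of $\vee_{i\in I}\cG^i$ (by Theorem \ref{thm: Dellacherie Thm Positive Meas}, the atomic support always lives in the family), and hence is itself of the form $\cup_{i\in J}G_i$ by \eqref{VeeGiSigma}. This shaves off one step but is otherwise identical to your argument.
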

	\begin{proof} Since $\cG_\mu=\cG_{|\mu|}$, we can assume w.l.o.g.~that $\mu \in \cM_+$. 
		Monotonicity is straightforward: as $\cH_\mu$ is the $\leq_{\mu}$-maximum of $\cH$ and $\cG_\mu\in\cG\subseteq\cH$, then also $\cG_\mu\subseteq\cH_\mu$, $\mu$-a.e..	
		
	We now show that $(\vee_{i\in J} \cG^i)_\mu=\cup_{i\in J}\cG^i_\mu$ holds for any  \emph{countable} set $J$. 
		Indeed \eqref{VeeGiSigma} shows that $\cup_{i\in J} \cG^i_\mu\in\vee_{i\in J}\cG^i$, and that if 	 $G\in \vee_{i\in J} \cG^i$ then  $G=\cup_{i\in J_1} G_i$ for some $J_1\subseteq J$, $J_1\neq\emptyset$ and $G_i\in\cG^i$. Therefore,
		$G \subseteq \cup_{i\in J_1} \cG^i_\mu \subseteq \cup_{i\in J} \cG^i_\mu$, $\mu$-a.e., so $\cup_{i\in J} \cG^i_\mu$ is the $\leq_{\mu}$-maximum of $\vee_{i\in J}\cG^i$.	

Now we prove that there exists $J\subseteq I$ countable such that $(\vee_{i\in J} \cG^i)_\mu \subseteq (\vee_{i\in I} \cG^i)_\mu \subseteq \cup_{i\in J}\cG^i_\mu$, which concludes the proof. By \eqref{VeeGiSigma}, since the set  $(\vee_{i\in I}\cG^i)_\mu$ belongs to   $\vee_{i\in I}\cG^i$, it is of the form $\cup_{i\in J} G_i$ for some $G_i\in\cG^i$ and countable $J\subseteq I$, $J\neq\emptyset$. Since $\cup_{i\in J} G_i\subseteq \cup_{i\in J} \cG^i_\mu $, $\mu$-a.e., we obtain
$(\vee_{i\in I}\cG^i)_\mu \subseteq \cup_{i\in J} \cG^i_\mu$. 
Finally, given any $J\subseteq I$,  $\vee_{i\in J} \cG^i \subseteq \vee_{i\in I} \cG^i$ implies $(\vee_{i\in J} \cG^i)_\mu \subseteq (\vee_{i\in I} \cG^i)_\mu$
since $\cG\mapsto\cG_\mu$ is increasing.
	\end{proof}

	\begin{proposition}
		If $\mu \in \cM_+$, the map $\cG\in\bG\mapsto\mu_\cG\in\cM_+$	is
		\begin{enumerate}[(i)]
			\item increasing, i.e., if $\cG\subseteq\cH$ then $\mu_\cG\leq\mu_\cH$;
			\item continuous from below with respect to the order, i.e., if $\{\cG^i \}_{i\in I}\subseteq \bG$ then
			\begin{equation}\label{MuVeeG}
				\mu_{\vee_{i\in I}\cG^i}=\vee_{i\in I}\mu_{\cG^i}.
			\end{equation}
		\end{enumerate}
	\end{proposition}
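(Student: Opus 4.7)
For (i), I would just unpack definitions. Proposition \ref{Prop:GmapGmu} already gives that $\cG \mapsto \cG_\mu$ is increasing, so if $\cG \subseteq \cH$ and I pick representatives with $G_\mu \subseteq H_\mu$ $\mu$-a.e., then for every $F \in \cF$
\[
\mu_\cG(F) = \mu(F \cap G_\mu) \leq \mu(F \cap H_\mu) = \mu_\cH(F),
\]
which is exactly $\mu_\cG \leq \mu_\cH$ in $\cM$.

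For (ii), one inequality, $\vee_{i \in I} \mu_{\cG^i} \leq \mu_{\vee_{i \in I} \cG^i}$, is immediate from (i) since each $\cG^i \subseteq \vee_{j \in I} \cG^j$. The opposite inequality is the content of the statement, and I would get it by reducing first to two measures, then to finite unions, then to countable ones via Proposition \ref{Prop:GmapGmu}. The key algebraic identity I need is
\[
\mu(\cdot \cap A_1) \vee \mu(\cdot \cap A_2) \;=\; \mu(\cdot \cap (A_1 \cup A_2))
\]
for $A_1,A_2 \in \cF$; this follows from the explicit formula $(\nu_1 \vee \nu_2)(F)=\sup\{\nu_1(B)+\nu_2(F\setminus B) : B\in \cF,\, B\subseteq F\}$ recalled before the Proposition, testing with $B = F \cap A_1$ (which gives the bound from below) and using that for arbitrary $B\subseteq F$ the sets $B\cap A_1$ and $(F\setminus B)\cap A_2$ are disjoint subsets of $F\cap(A_1\cup A_2)$ (which gives the bound from above). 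By induction, $\vee_{k\leq n}\mu(\cdot \cap A_k) = \mu(\cdot \cap \cup_{k\leq n} A_k)$.

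To finish (ii), I apply Proposition \ref{Prop:GmapGmu} to find $J\subseteq I$ countable with $(\vee_{i\in I}\cG^i)_\mu = \cup_{i\in J}\cG^i_\mu$. Enumerating $J = (i_n)_{n\in \bN}$, picking representatives $G_{i_k}$ of $\cG^{i_k}_\mu$, and setting $G_n := \cup_{k\leq n} G_{i_k}$, I get $G_n \uparrow G$ where $G$ represents $(\vee_{i\in I}\cG^i)_\mu$. Monotone convergence then gives, for each $F\in\cF$,
\[
\mu_{\vee_{i\in I}\cG^i}(F) \;=\; \mu(F\cap G) \;=\; \lim_n \mu(F\cap G_n) \;=\; \lim_n \bigl(\vee_{k\leq n}\mu_{\cG^{i_k}}\bigr)(F) \;\leq\; \bigl(\vee_{i\in I}\mu_{\cG^i}\bigr)(F),
\]
where the last inequality is just pointwise monotonicity of $\vee$. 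This yields $\mu_{\vee_{i\in I}\cG^i} \leq \vee_{i\in I}\mu_{\cG^i}$, and combined with the easy direction gives \eqref{MuVeeG}.

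The only subtle point is identifying the finite lattice supremum in $\cM$ correctly as $\mu(\cdot \cap (A_1\cup A_2))$: without the monotonicity of the net one cannot compute $\vee$ pointwise (as the excerpt explicitly warns right after \eqref{IncreasingNet}), so this small computation is doing the real work. Once it is in place, the countable selection from Proposition \ref{Prop:GmapGmu} reduces the general case to one application of monotone convergence.
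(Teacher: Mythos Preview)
Your proof is correct. Part (i) is essentially identical to the paper's argument. For part (ii), both you and the paper invoke Proposition \ref{Prop:GmapGmu} to reduce to a countable $J$ and then handle the hard inequality $\mu_{\vee_i\cG^i}\le\vee_i\mu_{\cG^i}$, but the mechanics differ. You prove the lattice identity $\mu(\cdot\cap A_1)\vee\mu(\cdot\cap A_2)=\mu(\cdot\cap(A_1\cup A_2))$, extend it by induction to finite unions, and then pass to the countable union via monotone convergence; in effect you compute the finite supremum $\vee_{k\le n}\mu_{\cG^{i_k}}$ explicitly. The paper instead never computes the supremum: it takes an arbitrary $F\subseteq\cup_{i\in J}\cG^i_\mu$, chops it into disjoint pieces $F^n\subseteq\cG^{j_n}_\mu$, uses $\nu:=\vee_i\mu_{\cG^i}\ge\mu_{\cG^{j_n}}$ to get $\nu(F^n)\ge\mu(F^n)$, and sums. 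Your route is a bit more algebraic and yields the pleasant side fact that the finite lattice sup of restrictions is the restriction to the union; the paper's route avoids computing the sup at all and is marginally shorter. Both are equally elementary and rest on the same reduction to a countable index set.
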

	\begin{proof}
		To prove monotonicity, recall that $\mu_\cG(\cdot):=\mu(\cdot\cap \cG_\mu)$ and $\mu_\cH(\cdot):=\mu(\cdot\cap\cH_\mu)$ with $\cG_\mu\in\cG$ and $\cH_\mu\in\cH$. Moreover, $\cH_\mu$ is the maximum in $\cH$ and so $\mu(F\cap H)\leq \mu(F\cap \cH_\mu)$ for every $F\in\cF$ and $H\in\cH$, and in particular for $H=\cG_\mu\in\cG\subseteq\cH$; thus $\mu_\cG\leq\mu_\cH$.
		
		Let us prove \eqref{MuVeeG}. By the  monotonicity of $\cG\mapsto\mu_\cG$ we  have that 
		$$\mu_{\vee_{i\in I}\cG^i}\geq \vee_{i\in I}\mu_{\cG^i}=:\nu ;$$ let us prove the opposite inequality. Notice that, since $\nu \ll \mu$ holds (e.g.~because $\nu \leq \mu$), we do not need to be careful about $\mu$-null sets, so we can improperly write $F\subseteq\cG_\mu^i$ etc.~without causing problems. By Proposition \ref{Prop:GmapGmu}, there exists  $J\subseteq I$  countable and such that $(\vee_{i\in I}\cG^i)_\mu=\cup_{i\in J}\cG^i_\mu$; let $(j_n)_{n\in\bN}$ be an enumeration of $J$.  Since
		$$\mu_{\vee_{i\in I}\cG^i}(\cdot):=\mu(\cdot\cap(\vee_{i\in I}\cG^i)_\mu)=\mu(\cdot\cap(  \cup_{i\in J}\cG^i_\mu )),$$ 
		showing $\nu\geq\mu_{\vee_{i\in I}\cG^i}$ is equivalent to showing that $\nu(F)\geq \mu(F)$ for every $F\subseteq \cup_{i\in J}\cG^i_\mu $, $F\in\cF$.  Notice that we can write an arbitrary $F\subseteq (\vee_{i\in I}\cG^i)_\mu=\cup_{n\in\bN}\cG^{j_n}_\mu$, $F\in\cF$ as the countable union of disjoint sets $F^n\subseteq\cG^{j_n}_\mu$, $n\in\bN$ defined as
		\begin{equation}
			F^0:=F\cap\cG^{j_0}_\mu, \quad F^{n+1}:=(F\cap\cG^{j_{n+1}}_\mu)\setminus(\cup_{k=1}^n\cG^{j_k}_\mu), \quad n\in\bN.
		\end{equation}
		Hence, since $\nu\geq\mu_{\cG^i}=\mu(\cdot \cap \cG^i_\mu)$ gives $\nu(G)\geq\mu(G)$ for all $G \subseteq \cG^i_{\mu}, G \in \cF$, we get 
		\begin{equation}
			\nu(F)=\sum_{n=1}^\infty\nu(F^n)\geq \sum_{n=1}^\infty\mu(F^n)=\mu(F),
		\end{equation}
		and the proof is complete.
	\end{proof}

	\section{The decomposition theorem on Riesz spaces}
	\label{sec: dec in riesz spaces}
	In this section we point out  that Theorem \ref{thm: Dellacherie Thm Positive Meas} can be further generalised to a decomposition result for operators on Riesz spaces. 
	
	To this purpose, we will make use of the definitions and facts about Riesz spaces described  in Section \ref{se: dependence of decomposition}, and a few more concepts which we now introduce. We will rely on results from \cite{aliprantis2006positive}; we recall that every Riesz space considered in \cite{aliprantis2006positive} is assumed to be Archimedean (see \cite[page 9]{aliprantis2006positive}). This is without loss of generality in our case since  we only consider order-complete Riesz spaces and every order-complete Riesz space is Archimedean \cite[Lem.\ 8.4]{AlBo99}.  Let $(E,\geqslant)$ be an order-complete Riesz space and $A\subseteq E$. Recall, from  \eqref{eq:DisjCompl}, that $A^d$ denotes the disjoint complement of $A$. Then, by \cite[Th.\ 1.39]{aliprantis2006positive}, $(A^d)^d$ is the smallest (with respect to the inclusion of sets) band of $E$ that contains $A$, it is called the band generated by  $A$, and it is denoted by $\text{Band}(A)$; if $A=\{x\}$ we write $\text{Band}(x)$ for $\text{Band}(\{x\})$. 
	
	The (real) vector space of all operators (i.e., linear maps) from $E$ to $F$ will be denoted by $\mathcal{L}(E, F)$. We say that $T\in \mathcal{L}(E, F)$ is \emph{positive} (and write $T\geq 0$) if $T(x)\geq 0$ for any $x\in E$ such that $x\geq 0$, and it is \emph{strictly positive} if $T(x)>0$ for any $x\in E$ such that $x>0$.
	An operator $T\in \mathcal{L}(E, F)$  is said to be \emph{order bounded} if it maps order bounded subsets of $E$ to order bounded subsets of $F$; the set of order bounded operators $T\in \mathcal{L}(E, F)$ is denoted by $\mathcal{L}_b(E, F)$. 
	Any positive operator is trivially order bounded; thus the difference of positive operators is  order bounded. Conversely, if  $F$ is order complete then  (see \cite[Th.\ 1.18]{aliprantis2006positive})   $\mathcal{L}_b(E, F)$ is an order complete Riesz space under its usual ordering ($T \geq S$ if $T(x) \geq S(x)$ holds for all $x\in E_+$), and so one can write any $T\in \mathcal{L}_b(E, F)$ as $T=T^+ - T^-$ with $T^\pm$ positive operators.  
	
	If $T:E\to F$ is a positive operator, $A$ is an ideal in $E$ and $F$ is order complete, one can define
	$$T_A(x):=\sup \{T(y): y \in A, 0 \leq y \leq x\}  \text{ for all } x \in E_+, $$
	and then extend $T_A$ to $E$ by setting
	$$T_A(x):=T_A(x^+) - T_A(x^-) \text{ for all } x \in E.$$
	Then, by combining \cite[Th.\ 1.28]{aliprantis2006positive} and \cite[Th.\ 1.10]{aliprantis2006positive}, it follows that $T_A:E\to F$ is a positive operator. Thus, if $T\in \mathcal{L}_b(E, F)$, the map
	\begin{align}
		\label{eq: def T_A for any T}
		T_{A}(x):=(T^+)_A(x)-(T^-)_A(x), \quad x\in E
	\end{align} 
	satisfies $T_{A}\in \mathcal{L}_b(E, F)$, and the map $T \mapsto T_{A}$ from $\mathcal{L}_{\mathrm{b}}(E, F)$ to $\mathcal{L}_{\mathrm{b}}(E, F)$ is a positive operator and an order projection (see \cite[Th.\ 2.7 and the preceding discussion]{aliprantis2006positive}).

	We now show that if $A$ is a band and $E$ is order complete, $T_A$ admits a convenient expression; while this result is very likely known, we could not find it in either of our standard references \cite{AlBo99,aliprantis2006positive}. Recall that if $A$ is a projection band in $E$ (i.e., a band such that $E=A\oplus A^d$), the \textit{projection map} $P_A: E\to E$ is defined by $P_A(x):=x_1$, where $x=x_1+x_2$ with $x_1\in A$ and $x_2\in A^d$.
	\begin{lemma}
		\label{prop: T_A=TP_A}
		Let $E$, $F$ be order complete Riesz spaces, $A\subseteq E$ be a band, and $T\in \mathcal{L}_b(E, F)$ then 
		$$T_A=T\circ P_A, \quad T_{A^d}=T\circ P_{A^d}, \quad T_A + T_{A^d}=T, $$
		and if $T\geq 0$ then $T_A \wedge T_{A^d}=0, \quad T_A \vee T_{A^d}=T$. 
	\end{lemma}
	\begin{proof}
		Since every band  in an order complete Riesz space  is a projection band \cite[Th.\ 8.20]{AlBo99}, $P_{A}$ exists and $P_{A}+P_{A^d}=I$ holds, and so $T\circ P_A + T\circ P_{A^d}=T$. 
		By \cite[Th.\ 1.28]{aliprantis2006positive} if $T\geq 0$ then $T_{A}=T$ on $A$ and $T_{A}=0$ on $A^{d}$, and so $T_A(x)=T\circ P_A(x)$ holds for all $x\in A$ and for all $x\in A^{d}$, and so by linearity for all $x\in E$. 
		Using \eqref{eq: def T_A for any T} and the linearity of $T \mapsto T \circ P_A$, one obtains that 
		$$T_{A}=(T^+)_A - (T^-)_A=T^+ \circ P_A - T^- \circ P_A=T \circ P_A$$
		holds also for any $T\in \mathcal{L}_b(E, F)$. 
		Since $A^d$ is a band this shows also  that $T_{A^d}=T\circ P_{A^d}$.  For the identities  $T_A \wedge T_{A^d}=0$ and $T_A \vee T_{A^d}=T$ see  \cite[Sec.\ 1.2, Ex.\ 3]{aliprantis2006positive}.
	\end{proof}
	
	We can now introduce the decomposition theorem for positive operators on Riesz spaces which generalises Theorem \ref{thm: Dellacherie Thm Positive Meas}. Following \cite{aliprantis2006positive}, we say that $E$ has the \emph{countable sup property} if, for any  $D\subseteq E$ which admits a supremum,  there exists an at most countable set $C\subseteq D$ that admits a supremum and it satisfies $\sup C=\sup D$. 
	\begin{theorem}\label{thm:decomp-Riesz}
		Let $E$, $F$ be order complete Riesz spaces,  $T:E\to F$ a strictly positive operator, $ \emptyset \neq B\subseteq E_+$, $A:=\text{Band}(B)$, and assume $B$ is order bounded from above. 	Then 
		\begin{enumerate}
			\item \label{it: generalisation} The following are equivalent
			\begin{enumerate}
				\item \label{it: x maximiser of B} There exists $x\in B$ such that $B \subseteq \text{Band}(x)$. 
				\item  \label{it: decomp T=U+V} There exist $z\in B$ and $U,V:E\to F$ positive operators  such that 
				\begin{align}
					\label{eq: T=U+V}
					T=U+V, \quad U(x)=0  \text{ for all } x\in (\text{Band}(z))^d , \quad V(y)=0  \text{ for all } y\in A .
				\end{align}
			\end{enumerate}
			Moreover, if the above statements hold then 
			$$A=\text{Band}(x)=\text{Band}(z),  \quad  U=T_{A}, \quad  V=T_{A^d}.$$
			It follows that such $U,V$ are unique and satisfy $U \wedge V =0$ and $U \vee V =T$.	
			\item \label{it: any B sigma closed has max} If $F$ has the countable sup property and $\{b_n\}_{n\in \bN}\subseteq B$ implies $\sup_n b_n \in B$ then there exists $x\in B$ as in item \ref{it: x maximiser of B}.	
		\end{enumerate}	
		
	\end{theorem}
	
	The proof of Theorem \ref{thm:decomp-Riesz} requires the following lemma.
	
	\begin{lemma}
		\label{prop: strictly positive w}
		If $E$ is an order complete Riesz space and $z\in B\subseteq E_+$ is such that $A:=\text{Band}(B)\neq \text{Band}(z)=:Z$ then there exists $w\in A \cap Z^d$ such that $w>0$.
	\end{lemma}
	\begin{proof}
		Trivially $Z\subset A$, so there exists $y \in A\setminus Z$. Since $A$ is a band $y^\pm \in A$, and since $Z$ is a vector space $y^+$ and $y^-$ cannot both belong to it; thus, by possibly replacing $y$ with $-y$,  we can assume w.l.o.g.~that $y^+\notin Z$, so  $y^+ \in A\setminus Z$. Since $E$ is complete, $Z$ is a projection band \cite[Th.\ 8.20]{AlBo99}. As any order projection is a positive operator \cite[Th.\ 1.43]{aliprantis2006positive} $y^+\geq 0$ implies $w:=P_{Z^d}(y^+)\geq 0$, and $Z \not \ni y^+=P_Z(y^+)+w $ implies $w \notin Z \ni 0$ and so $w>0$. Trivially $w\in Z^d$, and since $y^+\in A$ and $P_Z(y^+)\in Z \subseteq A$ we get $w=y^+ - P_Z(y^+)\in A$.
	\end{proof}
	
	\begin{proof}[Proof of Theorem \ref{thm:decomp-Riesz}]
		If item \ref{it: x maximiser of B} holds then $A= \text{Band}(x)$, and so Lemma \ref{prop: T_A=TP_A} gives that  item \ref{it: decomp T=U+V} holds with $z:=x, U:=T_{A},   V:=T_{A^d}$.
		Conversely, assume that item \ref{it: decomp T=U+V} holds. Since $z\in B$ trivially implies $A^d \subseteq (\text{Band}(z))^d$, we get that $U=T\circ P_A, V=T\circ P_{A^d}$ hold both on $A$ and on $A^d$, and so on  $A+A^d=E$, and so Lemma \ref{prop: T_A=TP_A} gives $U=T_{A},   V=T_{A^d}$ (in particular $U,V$ are unique) and $U \wedge V =0, U \vee V =T$. Finally $A=\text{Band}(z)$ holds, otherwise Lemma \ref{prop: strictly positive w} yields a $w>0$ such that $T(w)=U(w)+V(w)=0$, contradicting the strict positivity of $T$. In particular taking $x:=z$ we get that $B\subseteq A=\text{Band}(x)$, so item \ref{it: x maximiser of B} holds. 
		
		We now prove item \ref{it: any B sigma closed has max}.
		Since $E$ is order complete,  $B\neq \emptyset$ is  order bounded from above there exists $ \sup B=:x\in E$. 
		Since every order complete Riesz space is Archimedean \cite[Lem.\ 8.4]{AlBo99},  $E$ has the countable sup property \cite[Ch.\ 1.4, Ex.\ 1]{aliprantis2006positive}, and so there exists $\{b_n\}_{n\in \bN}\subseteq B$ such that $\sup_n b_n=x$. The assumption made on $B$ implies that $x\in B$ and, by definition of supremum, $x$ satisfies $y\leq x$ for all $y\in B$; since $B\subseteq E_+$ and $\text{Band}(x)$ is a band and so it is solid, it follows that $B\subseteq \text{Band}(x)$.
	\end{proof}

	\begin{remark}
		It is natural to wonder when one can replace $A$ with $B$ in \eqref{eq: T=U+V}. This can be done if $V:E\to F$ is an onto order-continuous lattice homomorphism, since in this case the kernel of $V$ is a band \cite[Th.\ 2.21, part (2)]{aliprantis2006positive}, and so $V=0$ on $B$ automatically implies $V=0$ on $A$.
		Recall also that a bijective operator $V:E\to F$ is a lattice homomorphism if and only if $V$ and $V^{-1}$ are positive \cite[Th.\ 9.17]{AlBo99}.
	\end{remark}

	\begin{remark}
		\label{rem: band A decomp}
		Theorem \ref{thm:decomp-Riesz} considers the decomposition $T=T_A + T_{A^d}$ of Lemma \ref{prop: T_A=TP_A} in the special case in which $A$ is a \emph{principal band}, i.e., it is generated by one element. This prompts the question of how general this is, i.e., when it is the case that all bands are principal bands. It turns out  that this is the case whenever $E$ admits a weak unit $e\in E$ (i.e., an element $e\in E$ such that $\text{Band}(e)=E$), that is, if $E$ is itself a principal band (see \cite[Ch.\ 4, Th.\ 31.1(ii)]{Lux00}). Notice that this will cover the setting of Example \ref{ex: T=mu}, since any strictly positive constant is a weak unit in $L^p(\mu)$.
	\end{remark}

	We will soon explain how Theorem \ref{thm:decomp-Riesz} subsumes Theorem  \ref{thm: Dellacherie Thm Positive Meas}, using the following Lemma. 
	In Lemma \ref{prop:band-L^p} and Example \ref{ex: T=mu},  $\mu$ will be a finite, positive,  $\sigma$-additive measure on the measurable space $(\Omega,\cF)$.
	\begin{lemma}\label{prop:band-L^p}
		Given  $A\in \cF$, define 
		$$O_\mu(A):=\{g\in L^p(\mu) | \: g=0 \: \mu\text{-a.e.\ on } A \}.$$
		Then, the  band generated by any $f\in L^p(\mu)$ and its disjoint complement in $L^p(\mu)$ are given by
		$$\text{Band}(f)=O_\mu(\{f=0 \}),  \quad (\text{Band}(f))^d=O_\mu(\{f \neq 0 \}).$$
	\end{lemma}
	\begin{proof}
		From \cite[Th.\ 1.38]{AlBo99}, we have that
		$$\text{Band}(f)=\{g\in L^p(\mu) | \: (|g|\wedge n|f|)_{n\in \bN}\uparrow |g| \}.$$
		Since $|g|\wedge n|f|\uparrow |g| \ind_{\{f\neq 0 \}}$, we get that $\text{Band}(f)=O_\mu(\{f=0 \})$,
		and thus trivially $O_\mu(\{f \neq 0 \})\subseteq (\text{Band}(f))^d$. 
		Conversely,  $\ind_{\{f\neq 0 \}}\in \text{Band}(f)$, so  if $g\in (\text{Band}(f))^d$ we get so $0=|g|\wedge \ind_{\{f\neq 0 \}}$, which implies $g\in O_\mu(\{f \neq 0 \})$. 
	\end{proof}
	
	\begin{example}
		\label{ex: T=mu}	
		We now illustrate how Theorem \ref{thm:decomp-Riesz} generalises Dellacherie's decomposition  \ref{thm: Dellacherie Thm Positive Meas}. 
		Fix $p\in [1,\infty]$ and consider the order-complete Riesz spaces  $E:=L^p(\mu), F =\bR$, and the positive operator $T:E\to F, f \mapsto \int f d\mu$.  As usual in measure theory we identify any $F\in \cF$ with its indicator function $\ind_F$, so $\cG\subseteq\cF$ is identified with $B\subseteq E_+$ given by $B:=\{\ind_G: G\in \cG\}$; correspondingly $\mu$ is identified with the operator $f \mapsto \int f d \mu$. 
		Clearly $\bR$ has the countable sup property, and if $\cG$ is closed under countable unions (even if only up to $\mu$-null sets) then $\{b_n\}_{n\in \bN}\subseteq B$ implies $\sup_n b_n \in B$. 
		By Lemma \ref{prop:band-L^p}, $G_\mu\in\cG$ (i.e., $\ind_{G_{\mu}}\in B$) is such that $G \setminus G_\mu$ is a $\mu$-null set for every $G\in \cG$ iff $B \subseteq  \text{Band}(\ind_{G_{\mu}})$.  By Lemma \ref{prop:band-L^p}, if $\bar{G}\in \cG$ and $A:=\text{Band}(\ind_{\bar{G}})$ then $P_A(f)=f \ind_{\bar{G}}$ for all $f\in L^p(\mu)$, so by Lemma \ref{prop: T_A=TP_A}  
		$$ \textstyle T_A(f)=\int f d\mu(\bar{G}\cap \cdot) \quad \text{ and } \quad  T_{A^d}(f)=\int f d\mu(\bar{G}^c\cap \cdot) \quad \text{ for all } f\in L^p(\mu).$$ 
	\end{example}
	
	\begin{remark}
		\label{rem: T to TA order projection}
		As  mentioned just after \eqref{eq: def T_A for any T}, the map $T \mapsto T_{A}$ from $\mathcal{L}_{\mathrm{b}}(E, F)$ to $\mathcal{L}_{\mathrm{b}}(E, F)$ is an order projection; this fact, combined with Example \ref{ex: T=mu}, shows that $\mu \mapsto \mu_{\cG}$ and $\mu \mapsto \mu_{\cG}^\perp$ are band projections on $\cM$. 	Proposition \ref{th: dependence atomic part} elaborates on this result, and in particular considers  not just the order structure but also the metric structure on $\cM$.  
	\end{remark}
	
	\begin{remark}
		The Hewitt-Yosida decomposition of a positive measure $\mu$ into its $\sigma$-additive and its purely finite additive component, which are disjoint, is an interesting example of a decomposition which does not arise in the setting of Remark \ref{rem: band A decomp}. 
		More generally, a decomposition of a measure into two disjoint components is associated to any band $B\subseteq \cM$: since $\cM$ is complete, any band in $\cM$ is a projection band, and so one one always write $\mu=P_B(\mu) + P_{B^d}(\mu)$ with $P_B \wedge P_{B^d}=0$. Instead, Remark \ref{rem: band A decomp} considers only the setting in which $B=(A^\circ)^d$ for some band $A\subseteq E$, where $A^\circ$ denotes the \emph{annihilator} of $A$, defined as $$A^{\mathrm{o}}:=\left\{T \in \mathcal{L}_{\mathrm{b}}(E, F): T=0 \text{ on } A\right\};$$
		indeed, if $B=(A^\circ)^d$  then $P_B(T)=T_A$ for each $T\in \mathcal{L}_{\mathrm{b}}(E, F)$, see \cite[Ch.\ 2.1, Ex.\ 4]{aliprantis2006positive}.
	\end{remark}

	\bibliography{bib_for_paper}{}
	\bibliographystyle{abbrv} 
	
\end{document}